\newtheorem{theorem}{Theorem}[section]
\newtheorem{proposition}[theorem]{Proposition}
\newtheorem{lemma}[theorem]{Lemma}
\theoremstyle{definition}
\newtheorem{remark}[theorem]{Remark}
\newtheorem{assumption}[theorem]{Assumption}
\newenvironment{nouppercase}{%
  \renewcommand{\uppercasenonmath}[1]{}}{}
\title[]{\Large
Asymptotically Efficient Estimation of Ergodic Rough Fractional Ornstein-Uhlenbeck Process under Continuous Observations
}
\author[K.~Chiba]{\large Kohei Chiba}
\address[Kohei Chiba]{Graduate School of Engineering Science, Osaka University, 3 Machikaneyama-cho 1-Chome, Toyonaka, Osaka, Japan}
\email{kchiba.es@osaka-u.ac.jp}
\author[T.~Takabatake]{\large Tetsuya Takabatake}
\address[Tetsuya Takabatake]{Department of Economics, Hiroshima University, 2-1 Kagamiyama 1-Chome, Higashi-Hiroshima, Hiroshima, Japan}
\email{tkbtk@hiroshima-u.ac.jp}
\keywords{fractional Ornstein-Uhlenbeck Process; Estimation of Drift Parameters; Continuous Observations; Local Asymptotic Normality Property}
\subjclass[2010]{62M09}
\date{\today}
\begin{document}

\begin{nouppercase}
	\maketitle
\end{nouppercase}

\begin{abstract}
	We consider the problem of asymptotically efficient estimation of drift parameters of the ergodic fractional Ornstein-Uhlenbeck process under continuous observations when the Hurst parameter $H<1/2$ and the mean of its stationary distribution is not equal to zero.
	In this paper, we derive asymptotically efficient rates and variances of estimators of drift parameters and prove an asymptotic efficiency of a maximum likelihood estimator of drift parameters.
	\end{abstract}
\vspace{0.2cm}

\section{Introduction}
Consider a one-dimensional stochastic differential equation~(SDE) of the form
\begin{equation}\label{definition_SDE}
	\mathrm{d}X_{t}^{\theta}=a(X_{t}^{\theta};\theta)\,\mathrm{d}t+\sigma \,\mathrm{d}B_{t}^{H},\ \ X_0^{\theta}=x_{0}\ \ t\in[0,T],
\end{equation}
where $x_{0}\in\mathbb{R}$, $\sigma>0$ and $B^{H}$ is a fractional Brownian motion~(fBm) with Hurst parameter $H\in(0,1)$.
Here $\theta$ is a parameter to be estimated and we aim at estimating $\theta$ based on a sample path of the solution $(X_{t}^{\theta})_{t\in[0,T]}$ of the SDE~(\ref{definition_SDE}), that is, continuous observations, when a length of observation period $T\to\infty$. 
In this situation, we can assume both parameters $H$ and $\sigma$ are known without loss of generality because their parameters are completely determined by a sample path of $(X_{t}^{\theta})_{t\in[0,T]}$ for any fixed $T>0$.

In this paper, we will investigate the problem of asymptotically efficient estimation of drift parameters of the ergodic fractional Ornstein-Uhlenbeck~(fOU) process under the following two parameterizations:
\begin{align}
	&a(x;\widetilde{\theta})=-\alpha x+\gamma,\ \ \widetilde{\theta}=(\alpha,\gamma)\in(0,\infty)\times\mathbb{R},\label{parameterization2}\\
	&a(x;\theta)=-\alpha(x-\mu),\ \ \theta=(\alpha,\mu)\in(0,\infty)\times\mathbb{R}.\label{parameterization1}
\end{align}
It is obvious that the parameters $\gamma$ and $\mu$ are mutually determined by the equality $\alpha\mu=\gamma$. 
In the case that the mean of the stationary distribution of the ergodic fOU process is equal to zero, that is, $\mu=\gamma=0$, it is well-known that for all $H\in(0,1)$, the MLE of the mean-reverting coefficient $\alpha$ is consistent, asymptotically normal with the convergence rate $\sqrt{T}$ and asymptotically efficient in the Fisher sense under continuous observations, see \cite{Kleptsyna-Le_Breton-2002,Kutoyants-2004-book,Brouste-Kleptsyna-2010}.  
In the other case, \cite{Lohvinenko-Ralchenko-2017} proved that the convergence rate of the MLE of $\gamma$ is $T^{1-H}$, which depends on the Hurst parameter and is slower than the convergence rate $\sqrt{T}$ of the MLE of $\alpha$, when $H>1/2$. 
Moreover, \cite{Tanaka-Xiao-Yu-2019,Chiba-2020} recently showed that the convergence rate of the MLE of $\gamma$ is $\sqrt{T}$ as same as the one of the MLE of $\alpha$ when $H<1/2$. 
Therefore, the convergence rate of the MLE of $\gamma$ no longer depends on the Hurst parameter $H$ when $H<1/2$. 

Our %%%main aims of this paper
main contributions in this paper are (1)~to prove an asymptotic normality of the MLE of $\mu$, (2)~to derive asymptotically efficient rates and variances of estimating $\mu$ and $\gamma$ %%%when $H<1/2$ 
and (3)~to prove the asymptotic efficiency of the MLEs of $\mu$ and $\gamma$ when $H<1/2$.
Our findings are that 
asymptotically efficient rates of estimating $\mu$ and $\gamma$ are different %%%when $H<1/2$ 
and the asymptotically efficient rate of estimating $\mu$ is faster than the one of estimating $\gamma$ when $H<1/2$.
Namely, we obtain the following asymptotic minimax lower bounds of estimators of $\mu$ and $\gamma$ respectively:
\begin{align}
	&\varliminf_{T\to\infty}\sup_{|\theta-\theta_{0}|<r}E_{\theta}^{T}[T(\widehat{\gamma}_{T}-\gamma)^2]\geq \frac{2\gamma_{0}^{2}}{\alpha_{0}},\label{lower_bound2}\\
	&\varliminf_{T\to\infty}\sup_{|\theta-\theta_{0}|<r}E_{\theta}^{T}[T^{2(1-H)}(\widehat{\mu}_{T}-\mu)^2]\geq\frac{\sigma^{2}\lambda_{H}}{\alpha_{0}^{2}}\label{lower_bound1}
\end{align}
for any $r>0$ and any sequences of estimators $\widehat{\mu}_{T}$ and $\widehat{\gamma}_{T}$, where $\theta_{0}=(\alpha_{0},\mu_{0})\in(0,\infty)\times\mathbb{R}$ and $\gamma_{0}:=\alpha_{0}\mu_{0}$.
Note that the asymptotic minimax lower bounds (\ref{lower_bound2}) and (\ref{lower_bound1}) imply that the asymptotically efficient rate of estimating $\mu$ is $T^{1-H}$ and it is faster than $\sqrt{T}$ which is the asymptotically efficient rate of estimating $\gamma$ when $H<1/2$.

In order to prove (\ref{lower_bound2}) and (\ref{lower_bound1}), we first prove the Local Asymptotic Normality~(LAN) properties for the ergodic fOU process under both parameterizations (\ref{parameterization2}) and (\ref{parameterization1}). 
Then we can derive (\ref{lower_bound2}) and (\ref{lower_bound1}) using the LAN properties and the H\'ajek-Le Cam asymptotic minimax theorem. 
Unfortunately, \cite{Chiba-2020} proved that the LAN property for the ergodic fOU process under the parameterization (\ref{parameterization2}) does not hold due to the singularity of the Fisher information matrix when $H\in(1/4,1/2)$ and the diagonal matrix $T^{-1/2}I_{2}$ is used as the rate matrix, where $I_{p}$ denotes the identity matrix of size $p$. 
As shown later in detail, this phenomenon is due to the fact that each component of the score function rescaled by the rate-matrix $T^{-1/2}I_{2}$ is asymptotically linearly dependent; similar phenomena appear in the literature when we jointly estimate the self-similarity index and the volatility parameter of self-similar Gaussian noises or the stable L\'evy process with symmetric jumps under high-frequency observations, see \cite{Brouste-Fukasawa-2018,Fukasawa-Takabatake-2019,Brouste-Masuda-2018} for details.
Therefore, similar to the previous studies~\cite{Brouste-Fukasawa-2018,Fukasawa-Takabatake-2019,Brouste-Masuda-2018}, we introduce a suitable class of ``non-diagonal'' rate matrices in order 
to prove the LAN property for the ergodic fOU process under the parameterization (\ref{parameterization2}), see Section~\ref{section_main_results} for details.

This paper is organized as follows. 
We summarize notation used in this paper in Section~\ref{section_notation} and preliminary results of the fOU process in Section~\ref{section_fOU}.
Our main results including the LAN properties for the ergodic fOU process under both parameterizations (\ref{parameterization2}) and (\ref{parameterization1}) and the asymptotic efficiencies of the MLEs of $(\alpha,\mu)$ and $(\alpha,\gamma)$ are given in Section~\ref{section_main_results}.  
We derive asymptotic minimax lower bounds including (\ref{lower_bound2}) and (\ref{lower_bound1}) using the LAN properties for the ergodic fOU process with suitably selected sequences of non-diagonal rate matrices and the H\'ajek-Le Cam asymptotic minimax theorem in Section~\ref{section_lower_bounds}. 
Proofs of our main results are given in Section~\ref{section_proof_main_results}.
%%%%%%%%%%%%%%%%%%%%%%%%%%%%%%%%%%%%%%%%%%%%%%%%%
%%%%%%%%%%%%%%%%%%%%%%%%%%%%%%%%%%%%%%%%%%%%%%%%%
%%%%%%%%%%%%%%%%%%%%
\section{Notation}\label{section_notation}
Let $\theta_{0}=(\alpha_{0},\mu_{0})\in(0,\infty)\times\mathbb{R}$ and $\gamma_{0}:=\alpha_{0}\mu_{0}$.
Let $(\Omega,\mathcal{F},\mathbb{P})$ be a complete probability space where a two-sided fractional Brownian motion $(B_{t}^{H})_{t\in\mathbb{R}}$ is defined. 
Denote by $C[0,T]$ the set of all $\mathbb{R}$-valued continuous functions on $[0,T]$ and by $\mathcal{B}(C[0,T])$ the Borel $\sigma$-algebra on $C[0,T]$ induced by the topology of uniform convergence. 
As shown in Section~\ref{subsection_solution_fOU}, there exists a pathwise unique solution $(X_{t}^{\theta})_{t\in[0,\infty)}$ of the fOU process on $(\Omega,\mathcal{F},\mathbb{P})$ and the solution $(X_{t}^{\theta})_{t\in[0,T]}$ induces a distribution $P_{\theta}^{T}$ on the measurable space $(C[0,T],\mathcal{B}(C[0,T]))$.  
Here we summarize definitions of several functions and stochastic processes used in representations of the likelihood ratio of the distributions $(P_{\theta}^{T})_{\theta\in\mathbb{R}^{2}}$.
\begin{enumerate}
	\item Define the transformation $\beta[f] \colon C[0,T]\to C[0,T]$ by
\begin{equation*}
	\beta_{t}[f](x):=\frac{}{}\sigma^{-1}\bar{d}_{H}^{-1}t^{H-1/2}\int_{0}^{t}(t-s)^{-1/2-H}s^{1/2-H}f(x_{s})\,\mathrm{d}s
\end{equation*}
for $f\in C(\mathbb{R})$ and $x\in C[0,T]$, where %$d_{H} = (2H\Gamma(3/2-H)\Gamma(H+1/2)\Gamma(2-2H)^{-1})^{1/2}$ and $\bar{d}_{H} = \Gamma(1/2-H)d_{H}$. 
\begin{equation*}
	\bar{d}_{H}:=\Gamma(1/2-H)\left(\frac{2H\Gamma(3/2-H)\Gamma(H+1/2)}{\Gamma(2-2H)}\right)^{1/2}.
\end{equation*}
Using this transformation, we set
\begin{equation*}
	\beta_{t}(\theta):=\beta_{t}[-\alpha(\mathrm{id}-\mu)](\mathbf{X}_{T}^{\theta_{0}}) 
\end{equation*}
for $\theta=(\alpha,\mu)$ and $\mathbf{X}_{T}^{\theta_{0}}:=(X_{t}^{\theta_{0}})_{t\in[0,T]}$, where $\mathrm{id}(\cdot)$ denotes the identity map from $\mathbb{R}$ to itself. 
	%%%%%
	\item
	Define by
	\begin{equation}\label{definition_Y}
		Y_t^{\theta_{0}}:=\frac{1}{\sigma}\int_{0}^{t}\eta_H(t,s)\,\mathrm{d}X_s^{\theta_{0}} =\int_{0}^{t}\beta_{s}(\theta_{0})\,\mathrm{d}s+W_t,
	\end{equation}
	where
	\begin{equation*}
		\eta_H(t,s):=\bar{d}_{H}^{-1}s^{1/2-H}\int_{s}^{t}(u-s)^{-1/2-H}u^{H-1/2}\,\mathrm{d}u,\ \ W_t:=\int_{0}^{t}\eta_H(t,s)\,\mathrm{d}B_s^H.
	\end{equation*}
	The stochastic integrals are defined in $L^{2}(\mathbb{P})$-sense and the process $W=(W_{t})_{t\in[0,\infty)}$ is a Wiener process on $(\Omega,\mathcal{F},\mathbb{P})$, see \cite{Tudor-Viens-2007} for details. 
	%%%%%
	\item 
	Define the filtration $(\mathcal{F}_{t})_{t\in[0,\infty)}$ by $\mathcal{F}_{t}:=\mathcal{F}_{t}^{W}\vee\mathcal{N}$ for $t\in[0,\infty)$, where $(\mathcal{F}_{t}^{W})_{t\in[0,\infty)}$ denotes the natural filtration of $W$ and $\mathcal{N}$ denotes the family of all null sets of $\Omega$ with respect to $\mathbb{P}$.
	Then we define the sequences of the square integrable $(\mathcal{F}_{t})_{t\in[0,\infty)}$-martingales $(M_{t}^{T})_{t\in[0,\infty)}$ and $(N_{t}^{T})_{t\in[0,\infty)}$ by
	\begin{equation*}
		M_{t}^{T}:=\frac{1}{\sqrt{T}}\int_{0}^{t\wedge T}\beta_{s}[\mathrm{id}-\mu_{0}](\mathbf{X}_{T}^{\theta_{0}})\,\mathrm{d}W_{s},\ \ 
		N_{t}^{T}:=T^{H-1}\int_{0}^{t\wedge T}\beta_{s}[1]\,\mathrm{d}W_{s}.
	\end{equation*}
	Denote by $\langle X,Y\rangle$ the quadratic covariation of two continuous semi-martingales $X$ and $Y$ and set $\langle X\rangle:=\langle X,X\rangle$ for notational simplicity.
\end{enumerate}
Finally, $\stackrel{\mathbb{P}}{\to}$ and $\stackrel{\mathcal{L}}{\to}$ denote the convergence in probability and in law under the probability measure $\mathbb{P}$ respectively, and $a\lesssim b$ means that there exists a universal constant $C$ such that $a\leq Cb$.
%%%%%%%%%%%%%%%%%%%%
\section{Preliminary Results of Fractional Ornstein-Uhlenbeck Process}\label{section_fOU}
\subsection{Stationary Solution of Fractional Ornstein-Uhlenbeck Process}\label{subsection_solution_fOU}
It is well-known that the SDE~(\ref{definition_SDE}) with the drift function defined by (\ref{parameterization1}) has the pathwise unique solution
\begin{equation*}
	X^{\theta}_{t}
	=e^{-\alpha t}x_{0} +(1-e^{-\alpha t})\mu +\sigma\int_{0}^{t}e^{-\alpha(t-s)}\,\mathrm{d}B_{s}^{H},\ \ t\in[0,\infty),
\end{equation*}
and the distance between the solution $X^{\theta}_{t}$ and the random variable $\bar{X}^{\theta}_{t}$ defined by
\begin{equation*}
	\bar{X}^{\theta}_{t} := \mu +\sigma\int_{-\infty}^{t} e^{-\alpha(t-s)}\,\mathrm{d}B_{s}^{H},\ \ t\in\mathbb{R},
\end{equation*}
decreases exponentially as $t\to\infty$ for any initial value $x_{0}$ when $\alpha>0$.
The stochastic process $\bar{X}^{\theta}=(\bar{X}^{\theta}_{t})_{t\in\mathbb{R}}$, defined on the probability space $(\Omega,\mathcal{F},\mathbb{P})$, is called the stationary solution of the fOU process.
It is also well-known that $\bar{X}^{\theta}$ is a stationary Gaussian process with mean $\mu$ and covariance function $c(t)$ given by
\begin{equation*}
	c(t):=\mathrm{Cov}[\bar{X}^{\theta}_{t},\bar{X}^{\theta}_{0}] 
	= \sigma^{2}\frac{\Gamma(2H+1)\sin(\pi H)}{2\pi}\int_{-\infty}^{\infty}e^{\sqrt{-1}tx}\frac{|x|^{1-2H}}{\alpha^{2}+x^{2}}\,\mathrm{d}x,\ \ t\in\mathbb{R},
\end{equation*}
and $c(t)=O(|t|^{2H-2})$ as $|t|\to\infty$. See \cite{Cheridito-Kawaguchi-Maejima-2003} for details.
%%%%%%%%%%
\subsection{Representation of Likelihood Ratio}
As shown in \cite{D.Nualart-Ouknine-2002,Tudor-Viens-2007}, the family of distributions $(P_{\theta}^T)_{\theta\in\mathbb{R}^{2}}$ is mutually absolute continuous and its likelihood ratio has the following representation:
\begin{equation}\label{LR_formula1}
	\frac{\mathrm{d}P_{\theta}^{T}}{\mathrm{d}P_{\theta_{0}}^{T}}(\mathbf{X}_{T}^{\theta_{0}}) 
	=\exp\left(\int_{0}^{T}(\beta_{t}(\theta)-\beta_{t}(\theta_{0}))\,\mathrm{d}W_{t}-\frac{1}{2}\int_{0}^{T}(\beta_{t}(\theta)-\beta_{t}(\theta_{0}))^2\,\mathrm{d}t\right),
\end{equation}
which can be rewritten as
\begin{equation}\label{LR_formula2}
	\frac{\mathrm{d}P_{\theta}^{T}}{\mathrm{d}P_{\theta_{0}}^{T}}(\mathbf{X}_{T}^{\theta_{0}}) 
	=\exp\left(\int_{0}^{T}(\beta_{t}(\theta)-\beta_{t}(\theta_{0}))\,\mathrm{d}Y_t^{\theta_{0}}-\frac{1}{2}\int_{0}^{T}(\beta_{t}(\theta)^2-\beta_{t}(\theta_{0})^2)\,\mathrm{d}t\right)
\end{equation}
using the relation~(\ref{definition_Y}).
%%%%%%%%%%
\subsection{Maximum Likelihood Estimators}
	We rewrite
	\begin{equation}\label{representation_beta_g}
		\beta_{t}(\theta)=\langle g(\theta),(-\beta_{t}[\mathrm{id}](\mathbf{X}_{T}^{\theta_{0}}),\beta_{t}[1])\rangle_{\mathbb{R}^2}
	\end{equation}
	using the function $g(\theta):=(\alpha,\alpha\mu)$ for $\theta=(\alpha,\mu)$. 
	Using (\ref{LR_formula2}) and (\ref{representation_beta_g}), the log-likelihood function $\ell_{T}(\theta)$ can be written as
	\begin{equation*}
		\ell_{T}(\theta):=\log{\frac{\mathrm{d}P_{\theta}^{T}}{\mathrm{d}P_{0}^{T}}(\mathbf{X}_{T}^{\theta_{0}})}
		=\left\langle g(\theta),\zeta_{T}(\theta_{0})\right\rangle_{\mathbb{R}^2}
		-\frac{1}{2}\left\langle g(\theta),\Gamma_{T}(\theta_{0})g(\theta)\right\rangle_{\mathbb{R}^2},
	\end{equation*}
	where 
	\begin{equation*}
		\zeta_{T}(\theta_{0}):=\int_{0}^{T}
		\begin{pmatrix}
		 -\beta_{t}[\mathrm{id}](\mathbf{X}_{T}^{\theta_{0}})\\
	 	 \beta_{t}[1]
	    \end{pmatrix}
		\,\mathrm{d}Y_t^{\theta_{0}},\ \ 
		\Gamma_{T}(\theta_{0})
		:=
		\int_{0}^{T}
		\begin{pmatrix}
	 	 -\beta_{t}[\mathrm{id}](\mathbf{X}_{T}^{\theta_{0}})\\
	 	 \beta_{t}[1]
	 	\end{pmatrix}^{\otimes 2}
		\,\mathrm{d}t.
	\end{equation*}
	Note that the maximization of $(\mathrm{d}P_{\theta}^{T}/\mathrm{d}P_{\theta_{0}}^{T})(\mathbf{X}_{T}^{\theta_{0}})$ with respect to $\theta$ is equivalent to that of $\ell_{T}(\theta)$ with respect to $\theta$.
	Using the Cauchy-Schwartz inequality, we have
	\begin{equation*}
		\det[\Gamma_{T}(\theta_{0})]>0\ \ \mbox{$\mathbb{P}$-a.s.}
	\end{equation*} 
	so that $g(\theta)\mapsto\ell_{T}(\theta)$ is concave on $\mathbb{R}^{2}$ $\mathbb{P}$-a.s. and the MLEs $\widehat{g(\theta)}_{T}$ and $\widehat{\theta}_{T}$ of the parameters $g(\theta)$ and $\theta$ respectively are 
	%%%explicitly 
	given by
	\begin{equation*}
		\widehat{g(\theta)}_{T}
		=\Gamma_{T}(\theta_{0})^{-1}\int_{0}^{T}
		\begin{pmatrix}
			-\beta_{t}[\mathrm{id}](\mathbf{X}_{T}^{\theta_{0}})\\
			\beta_{t}[1]
	  \end{pmatrix}\,\mathrm{d}Y_t^{\theta_{0}}\ \ \mbox{$\mathbb{P}$-a.s.}
	\end{equation*}
	and
	\begin{equation*}
		\widehat{\theta}_{T} = 
		\begin{cases}
			g^{-1}(\widehat{g(\theta)}_{T}) & \mathrm{if}\quad \widehat{g(\theta)}_{T} \in (\mathbb{R}\setminus\{0\})\times\mathbb{R},\\ 
			0 & otherwise,
		\end{cases}
		%\mathbf{1}_{\{ \widehat{g(\theta)}_{T} \in (\mathbb{R}\setminus\{0\})\times\mathbb{R}\}}
	\end{equation*}
	where $g^{-1}$ is the inverse function of $g$ defined on $(\mathbb{R}\setminus\{0\})\times\mathbb{R}$, that is, $g^{-1}((\alpha,\kappa))=(\alpha,\kappa/\alpha)$. 
	\begin{remark}
	The MLE $\widehat{\theta}_{T}$ can be defined if the first component of $\widehat{g(\theta)}_{T}$ is not zero. Of course, we can show that the probability of the set where the MLE $\widehat{\theta}_{T}$ can not be defined decreases quickly as $T\to\infty$, see Lemma \ref{0928lemma1} for details.
	\end{remark}

	\begin{remark}[\textit{Asymptotic normality of $\widehat{g(\theta)}_{T}$ with a degenerate variance-covariance matrix}]\label{remark_degenerate_variance}
		As shown in \cite{Chiba-2020,Tanaka-Xiao-Yu-2019}, 
		even though each component of $\sqrt{T}(\widehat{g(\theta)}_{T}-g(\theta_{0}))$ converges to a non-degenerate centered normal distribution, $\sqrt{T}(\widehat{g(\theta)}_{T}-g(\theta_{0}))$ converges to a ``degenerate'' centered bivariate normal distribution.
		Indeed, using (\ref{definition_Y}), the MLE $\widehat{g(\theta)}_{T}$ can be rewritten as
		\begin{equation}\label{theta_tilde_representation}
			\widehat{g(\theta)}_{T}
			=g(\theta_{0})+\Gamma_{T}(\theta_{0})^{-1}\int_{0}^{T}
			\begin{pmatrix}
			 -\beta_{t}[\mathrm{id}](\mathbf{X}_{T}^{\theta_{0}})\\
			 \beta_{t}[1]
			\end{pmatrix}\,\mathrm{d}W_{t}.
		\end{equation}		
		Then we can show that
		\begin{equation}\label{error_g_theta_degenerate}
			\sqrt{T}(\widehat{g(\theta)}_{T}-g(\theta_{0}))
			=\frac{1}{\langle M^{T}\rangle_{T}+o_{\mathbb{P}}(1)}\left(
			M_{T}^{T}
			\begin{pmatrix}
			 -1\\
			 -\mu_{0}
			\end{pmatrix}
			+o_{\mathbb{P}}(1)\right)
			\stackrel{\mathcal{L}}{\to}\mathcal{N}\left(0,2\alpha_{0}
			\begin{pmatrix}
			 1\\
			 \mu_{0}
			 \end{pmatrix}^{\otimes 2}
			\right)
		\end{equation}
		as $T\to\infty$; we omit the details of the proof since (\ref{error_g_theta_degenerate}) can be proved in the similar way to the proof of Theorem~\ref{theorem_asymptotic_normality}.
		As seen in (\ref{error_g_theta_degenerate}), the variance-covariance matrix of $\sqrt{T}(\widehat{g(\theta)}_{T}-g(\theta_{0}))$ is asymptotically degenerate due to the asymptotically linear dependence of each component of $\sqrt{T}(\widehat{g(\theta)}_{T}-g(\theta_{0}))$.
	\end{remark}
%%%%%%%%%%%%%%%%%%%%%%%%%%%%%%%%%%%%%%%%
\section{Main Results}\label{section_main_results}
Before stating our main results, we introduce a class of sequences of non-diagonal rate matrices in the similar way to \cite{Brouste-Fukasawa-2018} which plays an essential role to prove an asymptotic normality of the MLE with a non-degenerate asymptotic variance-covariance matrix, see also Remark~\ref{remark_degenerate_variance}, and the LAN property for the ergodic fOU process under the parametrization~(\ref{parameterization2}). 
%%%%%
\begin{assumption}\label{assumption_rate_matrix}
	Assume a sequence of matrices $(\varphi_{T}(\theta_{0}))_{T\in(0,\infty)}$ and a matrix $\overline{\varphi}(\theta_{0})$ of the form
	\begin{equation*}
		\varphi_{T}(\theta_{0}):=
		\begin{pmatrix}
			\varphi_{T}^{11}(\theta_{0})&\varphi_{T}^{12}(\theta_{0})\\
			\varphi_{T}^{21}(\theta_{0})&\varphi_{T}^{22}(\theta_{0})
		\end{pmatrix},\ \
		\overline{\varphi}(\theta_{0}):=
		\begin{pmatrix}
			\overline{\varphi}_{11}(\theta_{0})&\overline{\varphi}_{12}(\theta_{0})\\
			\overline{\varphi}_{21}(\theta_{0})&\overline{\varphi}_{22}(\theta_{0})
		\end{pmatrix}
	\end{equation*}
	satisfy the following conditions:
	\begin{enumerate}
		\item $\sqrt{T}\varphi_{T}^{11}(\theta_{0})\rightarrow\overline\varphi_{11}(\theta_{0})$ as $T\to\infty$,
		\item $\sqrt{T}\varphi_{T}^{12}(\theta_{0})\rightarrow\overline\varphi_{12}(\theta_{0})$ as $T\to\infty$,
		\item $s_{T}^{21}(\theta_{0}):=T^{1-H}(\mu\varphi_{T}^{11}(\theta_{0})-\varphi_{T}^{21}(\theta_{0})) \rightarrow\overline\varphi_{21}(\theta_{0})$ as $T\to\infty$,
		\item $s_{T}^{22}(\theta_{0}):=T^{1-H}(\mu\varphi_{T}^{12}(\theta_{0})-\varphi_{T}^{22}(\theta_{0})) \rightarrow\overline\varphi_{22}(\theta_{0})$ as $T\to\infty$,
		\item $\varphi_{T}^{11}(\theta_{0})\varphi_{T}^{22}(\theta_{0})-\varphi_{T}^{12}(\theta_{0})\varphi_{T}^{21}(\theta_{0})\neq 0$ for each $T>0$,
		\item $\overline\varphi_{11}(\theta_{0})\overline\varphi_{22}(\theta_{0})-\overline\varphi_{12}(\theta_{0})\overline\varphi_{21}(\theta_{0})\neq 0$.
	\end{enumerate}
\end{assumption}
%%%%%
\begin{remark}[\textit{Examples of $\varphi_{T}(\theta_{0})$}]
	For example, we can take 
	\begin{equation}\label{example_rate_matrix1}
		\varphi_{T}^{11}(\theta_{0})=\frac{1}{\sqrt{T}},\ \ \varphi_{T}^{21}(\theta_{0})=\frac{\mu_{0}}{\sqrt{T}},\ \ \varphi_{T}^{12}(\theta_{0})=0,\ \ \varphi_{T}^{22}(\theta_{0})=-\frac{1}{T^{1-H}},
	\end{equation}
	which give $\overline\varphi_{11}(\theta_{0})=1$, $\overline\varphi_{21}(\theta_{0})=0$, $\overline\varphi_{12}(\theta_{0})=0$ and $\overline\varphi_{22}(\theta_{0})=1$, or
	\begin{equation}\label{example_rate_matrix2}
		\varphi_{T}^{11}(\theta_{0})=\frac{1}{\mu_{0}T^{1-H}},\ \ \varphi_{T}^{21}(\theta_{0})=0,\ \ \varphi_{T}^{12}(\theta_{0})=\frac{1}{\sqrt{T}},\ \ \varphi_{T}^{22}(\theta_{0})=\frac{\mu_{0}}{\sqrt{T}},
	\end{equation}
	which give $\overline\varphi_{11}(\theta_{0})=0$, $\overline\varphi_{21}(\theta_{0})=1$, $\overline\varphi_{12}(\theta_{0})=1$ and $\overline\varphi_{22}(\theta_{0})=0$.
\end{remark}

%%%%%	
\begin{remark}
	If a sequence of matrices $(\varphi_{T}(\theta_{0}))_{T\in(0,\infty)}$ satisfying Assumption~\ref{assumption_rate_matrix}, then
	\begin{equation}\label{property_varphi_tilde}
		\widetilde{\varphi}_T(\theta_{0}):=
		\begin{pmatrix}
			\sqrt{T}&\mu_{0}T^{1-H}\\
			0&-T^{1-H}
		\end{pmatrix}^\ast
		\varphi_{T}(\theta_{0})=
		\begin{pmatrix}
			\sqrt{T}\varphi_{T}^{11}(\theta_{0})&\sqrt{T}\varphi_{T}^{12}(\theta_{0})\\
			s_{T}^{21}(\theta_{0})&s_{T}^{22}(\theta_{0})
		\end{pmatrix}
		\stackrel{T\to\infty}{\rightarrow}\overline{\varphi}(\theta_{0}).
	\end{equation}
\end{remark}
%%%%%
Then we can prove the following asymptotic normality of the MLE $\widehat{g(\theta)}_{T}$.
\begin{theorem}\label{theorem_asymptotic_normality}
	For a sequence of matrices $(\varphi_{T}(\theta_{0}))_{T\in(0,\infty)}$ satisfying Assumption~$\ref{assumption_rate_matrix}$,
	\begin{equation*}
		\varphi_{T}(\theta_{0})^{-1}(\widehat{g(\theta)}_{T}-g(\theta_{0}))\stackrel{\mathcal{L}}{\to} \mathcal{N}(0,\mathcal{I}(\theta_{0})^{-1})\ \ \mbox{as $T\to\infty$,}
	\end{equation*}
	where $\mathcal{I}(\theta_{0})$ is the positive definite matrix defined by
	\begin{equation}\label{definition_Fisher_information}
		\mathcal{I}(\theta_{0}):=\overline{\varphi}(\theta_{0})^{\ast}\mathrm{diag}\left((2\alpha_{0})^{-1},(\sigma^{2}\lambda_{H})^{-1}\right)\overline{\varphi}(\theta_{0})
	\end{equation}
	with
	\begin{equation*}
		\lambda_{H}:=\frac{2H\Gamma(3-2H)\Gamma(H+1/2)}{\Gamma(3/2-H)}.
	\end{equation*}
\end{theorem}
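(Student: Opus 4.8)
The plan is to use the explicit representation~(\ref{theta_tilde_representation}) of the MLE to reduce the claim to a two–dimensional martingale central limit theorem for $(M^{T}_{T},N^{T}_{T})$ together with a law of large numbers for its predictable bracket matrix, and then to transport the Gaussian limit through the deterministic matrices in~(\ref{property_varphi_tilde}). The first step is purely algebraic. Put $A:=\begin{pmatrix}-1&-\mu_{0}\\0&1\end{pmatrix}$ and $R_{T}:=\mathrm{diag}(\sqrt{T},T^{1-H})$, and use the linearity of $f\mapsto\beta_{t}[f]$ to write $\beta_{t}[\mathrm{id}]=\beta_{t}[\mathrm{id}-\mu_{0}]+\mu_{0}\beta_{t}[1]$, so that $(-\beta_{t}[\mathrm{id}](\mathbf{X}^{\theta_{0}}_{T}),\beta_{t}[1])^{\ast}=A\,(\beta_{t}[\mathrm{id}-\mu_{0}](\mathbf{X}^{\theta_{0}}_{T}),\beta_{t}[1])^{\ast}$. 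Writing $C_{T}$ for the symmetric $2\times2$ matrix with entries $\langle M^{T}\rangle_{T}$, $\langle M^{T},N^{T}\rangle_{T}$, $\langle N^{T}\rangle_{T}$ and using the definitions of $M^{T}$, $N^{T}$, one checks that $\int_{0}^{T}(-\beta_{t}[\mathrm{id}](\mathbf{X}^{\theta_{0}}_{T}),\beta_{t}[1])^{\ast}\,\mathrm{d}W_{t}=A R_{T}(M^{T}_{T},N^{T}_{T})^{\ast}$ and $\Gamma_{T}(\theta_{0})=A R_{T}C_{T}R_{T}A^{\ast}$. Substituting into~(\ref{theta_tilde_representation}), cancelling $A$ and one factor $R_{T}$, and using the matrix identity $(A^{\ast})^{-1}R_{T}^{-1}=-\varphi_{T}(\theta_{0})\widetilde{\varphi}_{T}(\theta_{0})^{-1}$ that follows from the definition of $\widetilde{\varphi}_{T}(\theta_{0})$ in~(\ref{property_varphi_tilde}), I arrive at the reduction formula
\begin{equation*}
	\varphi_{T}(\theta_{0})^{-1}\bigl(\widehat{g(\theta)}_{T}-g(\theta_{0})\bigr)=-\,\widetilde{\varphi}_{T}(\theta_{0})^{-1}\,C_{T}^{-1}\begin{pmatrix}M^{T}_{T}\\ N^{T}_{T}\end{pmatrix}\qquad\mathbb{P}\text{-a.s.},
\end{equation*}
which is legitimate since $\det\Gamma_{T}(\theta_{0})>0$ (hence $\det C_{T}>0$) $\mathbb{P}$-a.s.\ and $\det\varphi_{T}(\theta_{0})\neq0$.

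The second step is the law of large numbers $C_{T}\stackrel{\mathbb{P}}{\to}\overline{C}:=\mathrm{diag}\bigl((2\alpha_{0})^{-1},(\sigma^{2}\lambda_{H})^{-1}\bigr)$. For the $(2,2)$-entry, a direct computation of the Beta-type integral defining $\beta_{t}[1]$ gives $\beta_{t}[1]=c_{H}t^{1/2-H}$ with $c_{H}:=\sigma^{-1}\bar{d}_{H}^{-1}\Gamma(1/2-H)\Gamma(3/2-H)/\Gamma(2-2H)$, so that $\langle N^{T}\rangle_{T}=T^{2H-2}\int_{0}^{T}\beta_{s}[1]^{2}\,\mathrm{d}s$ is a deterministic constant, and the identity $\Gamma(3-2H)=2(1-H)\Gamma(2-2H)$ together with the definition of $\bar{d}_{H}$ shows that it equals $(\sigma^{2}\lambda_{H})^{-1}$ exactly. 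For the $(1,1)$-entry, $\langle M^{T}\rangle_{T}=T^{-1}\int_{0}^{T}\beta_{s}[\mathrm{id}-\mu_{0}](\mathbf{X}^{\theta_{0}}_{T})^{2}\,\mathrm{d}s$; replacing $X^{\theta_{0}}$ by its stationary version $\bar{X}^{\theta_{0}}$ (the $L^{2}(\mathbb{P})$-distance decays exponentially, and the kernel defining $\beta$ is integrable), invoking an ergodic-type argument as in \cite{Kleptsyna-Le_Breton-2002,Brouste-Kleptsyna-2010,Tanaka-Xiao-Yu-2019,Chiba-2020}, and evaluating the limiting second moment through the spectral density of $\bar{X}^{\theta_{0}}$—which, since $\bar{X}^{\theta_{0}}-\mu_{0}$ is a centered stationary fOU with parameters $(\alpha_{0},\sigma,H)$, reduces to the classical zero-mean computation—gives $\langle M^{T}\rangle_{T}\stackrel{\mathbb{P}}{\to}(2\alpha_{0})^{-1}$. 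For the off-diagonal entry, $\langle M^{T},N^{T}\rangle_{T}=c_{H}T^{H-3/2}\int_{0}^{T}\beta_{s}[\mathrm{id}-\mu_{0}](\mathbf{X}^{\theta_{0}}_{T})\,s^{1/2-H}\,\mathrm{d}s$ has mean zero, and a direct second-moment estimate (using the decay of the relevant covariances, available when $H<1/2$) gives $\langle M^{T},N^{T}\rangle_{T}=o_{\mathbb{P}}(1)$, its variance being $\lesssim T^{-1}$. This second step is the main obstacle, the delicate point being the control of the non-stationarity of $s\mapsto\beta_{s}[\mathrm{id}-\mu_{0}](\mathbf{X}^{\theta_{0}}_{T})$—coming both from the transient solution $X^{\theta_{0}}$ and from the lower endpoint $0$ in the integral defining $\beta$—and the identification of the limit $(2\alpha_{0})^{-1}$.

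Finally I would combine the two steps. By the central limit theorem for continuous local martingales applied to $(M^{T}_{t},N^{T}_{t})_{t\ge0}$ at time $T$—the Lindeberg-type condition being automatic by path-continuity, and the bracket matrix at time $T$ being precisely $C_{T}$, which by Step~2 converges in probability to the deterministic matrix $\overline{C}$—one gets $(M^{T}_{T},N^{T}_{T})^{\ast}\stackrel{\mathcal{L}}{\to}\mathcal{N}(0,\overline{C})$. Since $\overline{C}$ is positive definite, $C_{T}$ is eventually invertible with $C_{T}^{-1}\stackrel{\mathbb{P}}{\to}\overline{C}^{-1}$, so Slutsky's lemma yields $C_{T}^{-1}(M^{T}_{T},N^{T}_{T})^{\ast}\stackrel{\mathcal{L}}{\to}\mathcal{N}(0,\overline{C}^{-1})$; and since $\widetilde{\varphi}_{T}(\theta_{0})\to\overline{\varphi}(\theta_{0})$ by~(\ref{property_varphi_tilde}) with $\overline{\varphi}(\theta_{0})$ invertible by Assumption~\ref{assumption_rate_matrix}\,(6), a further application of Slutsky's lemma to the reduction formula gives
\begin{align*}
	\varphi_{T}(\theta_{0})^{-1}\bigl(\widehat{g(\theta)}_{T}-g(\theta_{0})\bigr)&\stackrel{\mathcal{L}}{\to}\mathcal{N}\Bigl(0,\overline{\varphi}(\theta_{0})^{-1}\overline{C}^{-1}\bigl(\overline{\varphi}(\theta_{0})^{-1}\bigr)^{\ast}\Bigr)\\
	&=\mathcal{N}\bigl(0,(\overline{\varphi}(\theta_{0})^{\ast}\overline{C}\,\overline{\varphi}(\theta_{0}))^{-1}\bigr)=\mathcal{N}(0,\mathcal{I}(\theta_{0})^{-1}),
\end{align*}
where the last equality is the definition~(\ref{definition_Fisher_information}) of $\mathcal{I}(\theta_{0})$, and positive definiteness of $\mathcal{I}(\theta_{0})$ is immediate from the positive definiteness of $\overline{C}$ and the invertibility of $\overline{\varphi}(\theta_{0})$.
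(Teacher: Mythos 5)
Your proposal follows essentially the same route as the paper: the same algebraic reduction of $\varphi_{T}(\theta_{0})^{-1}(\widehat{g(\theta)}_{T}-g(\theta_{0}))$ to $-\widetilde{\varphi}_{T}(\theta_{0})^{-1}C_{T}^{-1}(M_{T}^{T},N_{T}^{T})^{\ast}$ --- an equivalent rewriting of the paper's (\ref{rescaled_est_err}), since $\widetilde{\Gamma}_{T}(\theta_{0})^{-1}\widetilde{\varphi}_{T}(\theta_{0})^{\ast}=\widetilde{\varphi}_{T}(\theta_{0})^{-1}C_{T}^{-1}$ for the bracket matrix $C_{T}$ --- followed by the exact computation of $\langle N^{T}\rangle_{T}$ as in (\ref{N_quad_var}), the martingale CLT and Slutsky's theorem. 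The one place where you are substantially thinner than the paper is the convergence $\langle M^{T}\rangle_{T}\stackrel{\mathbb{P}}{\to}(2\alpha_{0})^{-1}$ and $\langle M^{T},N^{T}\rangle_{T}\stackrel{\mathbb{P}}{\to}0$, which you correctly single out as the main obstacle but only sketch (via stationarization and an ergodic-type second-moment argument); the paper isolates precisely this as Proposition~\ref{fOU_observed_Fisher} and spends Section~\ref{section_proof_preliminary_results} on the covariance and hypercontractivity estimates your sketch gestures at, so your outline is the right plan but does not by itself discharge that step.
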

We can also prove the following asymptotic normality of the MLE $\widehat{\theta}_{T}$.
%%%%%
\begin{theorem}\label{corollary_asymptotic_normality}
	Consider a sequence of matrices $(\varphi_{T}(\theta_{0}))_{T\in(0,\infty)}$ satisfying Assumption~$\ref{assumption_rate_matrix}$. 
	Set 	
	\begin{equation}\label{definition_Psi_theta}
		\Psi_{T}(\theta_{0}):=J_{g^{-1}}(g(\theta_{0}))\varphi_{T}(\theta_{0})=J_{g}(\theta_{0})^{-1}\varphi_{T}(\theta_{0}),
	\end{equation}
	where $J_{f}(x)$ denotes the Jacobian matrix of a function $f$ at the point $x$. Then 
	\begin{equation*}
		\Psi_{T}(\theta_{0})^{-1}(\widehat{\theta}_{T}-\theta_{0})\stackrel{\mathcal{L}}{\to} \mathcal{N}(0,\mathcal{I}(\theta_{0})^{-1})\ \ \mbox{as $T\to\infty$.}
	\end{equation*}
\end{theorem}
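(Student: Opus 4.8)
The plan is to derive Theorem~\ref{corollary_asymptotic_normality} from Theorem~\ref{theorem_asymptotic_normality} by a delta-method argument applied to the smooth map $g^{-1}$ at the point $g(\theta_{0})=(\alpha_{0},\alpha_{0}\mu_{0})$. The one genuine difficulty is that the inverse rate matrix $\varphi_{T}(\theta_{0})^{-1}$ is non-diagonal and, by (\ref{property_varphi_tilde}), blows up at the rate $T^{1-H}$, which exceeds $\sqrt{T}$ when $H<1/2$; hence the negligibility of the quadratic Taylor remainder is not automatic, and this is the step I expect to be the main obstacle.

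First I would localise. Since each entry of $\varphi_{T}(\theta_{0})$ tends to $0$ (by Assumption~\ref{assumption_rate_matrix} and (\ref{property_varphi_tilde})), Theorem~\ref{theorem_asymptotic_normality} implies $\widehat{g(\theta)}_{T}\stackrel{\mathbb{P}}{\to}g(\theta_{0})$; as $\alpha_{0}>0$, the first coordinate of $\widehat{g(\theta)}_{T}$ is nonzero and $\widehat{g(\theta)}_{T}$ lies in a fixed compact neighbourhood $U\subset(0,\infty)\times\mathbb{R}$ of $g(\theta_{0})$ with probability tending to one (cf.\ Lemma~\ref{0928lemma1}). On this event $\widehat{\theta}_{T}=g^{-1}(\widehat{g(\theta)}_{T})$, and since its complement has probability tending to $0$ it contributes nothing to the limit in law; so it suffices to analyse $\Psi_{T}(\theta_{0})^{-1}(g^{-1}(\widehat{g(\theta)}_{T})-\theta_{0})$ on $\{\widehat{g(\theta)}_{T}\in U\}$. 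On $U$ the map $g^{-1}$ is $C^{2}$ with bounded second derivatives, so Taylor's theorem gives
\[
\widehat{\theta}_{T}-\theta_{0}=J_{g^{-1}}(g(\theta_{0}))\bigl(\widehat{g(\theta)}_{T}-g(\theta_{0})\bigr)+R_{T},\qquad |R_{T}|\lesssim\bigl|\widehat{g(\theta)}_{T}-g(\theta_{0})\bigr|^{2}.
\]
Multiplying by $\Psi_{T}(\theta_{0})^{-1}=\varphi_{T}(\theta_{0})^{-1}J_{g}(\theta_{0})$ and using $J_{g}(\theta_{0})J_{g^{-1}}(g(\theta_{0}))=I_{2}$ (the inverse function theorem), one obtains
\[
\Psi_{T}(\theta_{0})^{-1}(\widehat{\theta}_{T}-\theta_{0})=\varphi_{T}(\theta_{0})^{-1}\bigl(\widehat{g(\theta)}_{T}-g(\theta_{0})\bigr)+\varphi_{T}(\theta_{0})^{-1}J_{g}(\theta_{0})R_{T}.
\]
By Theorem~\ref{theorem_asymptotic_normality} the first term converges in law to $\mathcal{N}(0,\mathcal{I}(\theta_{0})^{-1})$, so by Slutsky's lemma it remains only to prove $\varphi_{T}(\theta_{0})^{-1}J_{g}(\theta_{0})R_{T}\stackrel{\mathbb{P}}{\to}0$.

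For this last step I would exploit the factorisation in (\ref{property_varphi_tilde}). Writing that relation as $\widetilde{\varphi}_{T}(\theta_{0})=P_{T}\,\varphi_{T}(\theta_{0})$ (so $P_{T}$ is the transpose of the explicit matrix there), we get $\varphi_{T}(\theta_{0})^{-1}=\widetilde{\varphi}_{T}(\theta_{0})^{-1}P_{T}$; since $\widetilde{\varphi}_{T}(\theta_{0})\to\overline{\varphi}(\theta_{0})$ with $\det\overline{\varphi}(\theta_{0})\neq0$, the matrices $\widetilde{\varphi}_{T}(\theta_{0})^{-1}$ are bounded. Because $1-H>1/2$ for $H<1/2$, the entries of $P_{T}$ are $O(T^{1-H})$ and those of $P_{T}^{-1}$ are $O(T^{-1/2})$, whence $\|\varphi_{T}(\theta_{0})^{-1}\|=O(T^{1-H})$ and $\|\varphi_{T}(\theta_{0})\|=O(T^{-1/2})$. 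The latter bound together with the boundedness in probability of $\varphi_{T}(\theta_{0})^{-1}(\widehat{g(\theta)}_{T}-g(\theta_{0}))$ from Theorem~\ref{theorem_asymptotic_normality} gives $|\widehat{g(\theta)}_{T}-g(\theta_{0})|=O_{\mathbb{P}}(T^{-1/2})$, so that
\[
\bigl|\varphi_{T}(\theta_{0})^{-1}J_{g}(\theta_{0})R_{T}\bigr|\lesssim T^{1-H}\bigl|\widehat{g(\theta)}_{T}-g(\theta_{0})\bigr|^{2}=O_{\mathbb{P}}(T^{-H})=o_{\mathbb{P}}(1),
\]
using $H>0$. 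Combining the two displayed identities with Slutsky's lemma then yields $\Psi_{T}(\theta_{0})^{-1}(\widehat{\theta}_{T}-\theta_{0})\stackrel{\mathcal{L}}{\to}\mathcal{N}(0,\mathcal{I}(\theta_{0})^{-1})$. In short, the classical delta method does not apply verbatim because $\|\varphi_{T}(\theta_{0})^{-1}\|\asymp T^{1-H}\gg\sqrt{T}$; the argument goes through only because the error $\widehat{g(\theta)}_{T}-g(\theta_{0})$ is of the smaller order $T^{-1/2}$, leaving a spare factor $T^{-H}$ to absorb the quadratic remainder.
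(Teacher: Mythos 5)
Your proof is correct and follows essentially the same route as the paper: localise to the event where $\widehat{g(\theta)}_{T}$ is close to $g(\theta_{0})$ (so that $\widehat{\theta}_{T}=g^{-1}(\widehat{g(\theta)}_{T})$), apply the delta method there, and discard the exceptional event, whose contribution vanishes in probability by Lemma~\ref{0928lemma1}. The paper simply invokes ``the delta method and Slutsky's theorem'' at this point, whereas you usefully make explicit the one non-routine step --- that the quadratic Taylor remainder, of order $O_{\mathbb{P}}(T^{-1})$ since $\|\varphi_{T}(\theta_{0})\|=O(T^{-1/2})$, is still annihilated by the inverse rate matrix of order $T^{1-H}$ because $H>0$ --- and these operator-norm bounds are correct consequences of (\ref{property_varphi_tilde}) and Assumption~\ref{assumption_rate_matrix}.
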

%%%%%
\begin{remark}\label{remark_Psi_diagonal}
	We can take some diagonal matrices as the rate matrix $\Psi_{T}(\theta_{0})$. For example, we can take
	\begin{equation}\label{diagonal_rate_matrix}
		\Psi_{T}(\theta_{0})=\mathrm{diag}(\sqrt{T},-\alpha_{0}T^{1-H})^{-1}
	\end{equation}
	when the elements of the matrix $\varphi_{T}(\theta_{0})$ are defined by (\ref{example_rate_matrix1}). Using the rate matrix $\Psi_{T}(\theta_{0})$ defined by (\ref{diagonal_rate_matrix}) and Theorem~\ref{corollary_asymptotic_normality}, we obtain
	\begin{equation*}
		\mathrm{diag}(\sqrt{T},-\alpha_{0}T^{1-H})(\widehat{\theta}_{T}-\theta_{0})\stackrel{\mathcal{L}}{\to}
		\mathcal{N}\left(0,\mathrm{diag}(2\alpha_{0},\sigma^{2}\lambda_{H})\right)\ \ \mbox{as $T\to\infty$},
	\end{equation*}
	which implies each component of the suitably rescaled estimation error of the MLE $\widehat{\theta}_{T}=(\widehat{\alpha}_{T},\widehat{\mu}_{T})$ is asymptotically normal and asymptotically independent.
	Then we can construct confidence intervals of the mean-reverting coefficient $\alpha$ and the mean-reverting level $\mu$ using the central limit theorems
	\begin{equation*}
		\sqrt{\frac{T}{2\alpha_{0}}}(\widehat{\alpha}_{T}-\alpha_{0})\stackrel{\mathcal{L}}{\to}\mathcal{N}(0,1),\ \ 
		\frac{\widehat{\alpha}_{T}T^{1-H}}{\sigma\sqrt{\lambda_{H}}}(\widehat{\mu}_{T}-\mu_{0})\stackrel{\mathcal{L}}{\to}\mathcal{N}(0,1)\ \ \mbox{as $T\to\infty$.}
	\end{equation*}
\end{remark}
%%%%%%%%%%
Moreover, the LAN property for the ergodic fOU process under continuous observations holds for each parameterization~(\ref{parameterization2}) and (\ref{parameterization1}) as follows.
\begin{theorem}\label{theorem_LAN}
	Consider a sequence of matrices $(\varphi_{T}(\theta_{0}))_{T\in(0,\infty)}$ satisfying Assumption~$\ref{assumption_rate_matrix}$ and set the rate matrix $\Psi_{T}(\theta_{0})$ as $(\ref{definition_Psi_theta})$. Then the following LAN properties hold as $T\to\infty$:
	\begin{align}
		\label{LAN_parameterization2}
		&\frac{\mathrm{d}P_{g^{-1}(g(\theta_{0})+\varphi_{T}(\theta_{0})u)}^T}{\mathrm{d}P_{\theta_{0}}^T}(\mathbf{X}_{T}^{\theta_{0}})
		=\exp\left(\left\langle u,\Delta_{T}(\theta_{0})\right\rangle_{\mathbb{R}^{2}} -\frac{1}{2}\left\langle u,\mathcal{I}(\theta_{0})u\right\rangle_{\mathbb{R}^{2}}+o_{\mathbb{P}}(1)\right),\\
		\label{LAN_parameterization1}
		&\frac{\mathrm{d}P_{\theta_{0}+\Psi_{T}(\theta_{0})u}^T}{\mathrm{d}P_{\theta_{0}}^T}(\mathbf{X}_{T}^{\theta_{0}})
		=\exp\left(\left\langle u,\Delta_{T}(\theta_{0})\right\rangle_{\mathbb{R}^{2}} -\frac{1}{2}\left\langle u,\mathcal{I}(\theta_{0})u\right\rangle_{\mathbb{R}^{2}}+o_{\mathbb{P}}(1)\right)
	\end{align}
	for each $u\in\mathbb{R}^{2}$, where the positive definite matrix $\mathcal{I}(\theta_{0})$ is defined by $(\ref{definition_Fisher_information})$ and the random vector $\Delta_{T}(\theta_{0}):=-\widetilde{\varphi}_{T}(\theta_{0})^{\ast}(M_{T}^{T},N_{T}^{T})$ satisfies $\Delta_{T}(\theta_{0})\stackrel{\mathcal{L}}{\to}\mathcal{N}(0,\mathcal{I}(\theta_{0}))$ as $T\to\infty$. 
\end{theorem}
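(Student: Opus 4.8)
The plan is to convert the likelihood ratio into an explicit quadratic form in $u$, using crucially that $\beta_{t}(\theta)$ is \emph{linear} in $g(\theta)$ by (\ref{representation_beta_g}), and then to identify the linear and quadratic coefficients with $\Delta_{T}(\theta_{0})$ and $\mathcal{I}(\theta_{0})$ through the limit theorems already established in the proof of Theorem~\ref{theorem_asymptotic_normality}. \textbf{Step 1 (exact quadratic expansion under (\ref{parameterization2})).} For $\theta=g^{-1}(g(\theta_{0})+\varphi_{T}(\theta_{0})u)$ one has $g(\theta)-g(\theta_{0})=\varphi_{T}(\theta_{0})u$, so writing $v_{t}:=(-\beta_{t}[\mathrm{id}](\mathbf{X}_{T}^{\theta_{0}}),\beta_{t}[1])$ and using (\ref{LR_formula1}) with (\ref{representation_beta_g}) one sees the log-likelihood ratio equals $\langle\varphi_{T}(\theta_{0})u,\int_{0}^{T}v_{t}\,\mathrm{d}W_{t}\rangle_{\mathbb{R}^{2}}-\tfrac{1}{2}\langle\varphi_{T}(\theta_{0})u,\Gamma_{T}(\theta_{0})\varphi_{T}(\theta_{0})u\rangle_{\mathbb{R}^{2}}$. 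Exploiting linearity of $f\mapsto\beta_{t}[f]$, i.e.\ $\beta_{t}[\mathrm{id}]=\beta_{t}[\mathrm{id}-\mu_{0}]+\mu_{0}\beta_{t}[1]$, I factor $v_{t}=Bw_{t}$ with $w_{t}:=(\beta_{t}[\mathrm{id}-\mu_{0}](\mathbf{X}_{T}^{\theta_{0}}),\beta_{t}[1])$ and $B$ the constant matrix with rows $(-1,-\mu_{0})$ and $(0,1)$. Setting $R:=\mathrm{diag}(\sqrt{T},T^{1-H})$, the definitions of $M^{T},N^{T}$ in Section~\ref{section_notation} give $\int_{0}^{T}v_{t}\,\mathrm{d}W_{t}=-A(M_{T}^{T},N_{T}^{T})$ and $\Gamma_{T}(\theta_{0})=BRG_{T}RB^{\ast}$, where $A$ is the triangular matrix displayed in (\ref{property_varphi_tilde}) (so $\widetilde{\varphi}_{T}(\theta_{0})=A^{\ast}\varphi_{T}(\theta_{0})$) and $G_{T}:=\langle(M^{T},N^{T})\rangle_{T}$ is the quadratic-variation matrix of $(M^{T},N^{T})$. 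The elementary identity $RB^{\ast}=-A^{\ast}$ then gives at once $\varphi_{T}(\theta_{0})^{\ast}\Gamma_{T}(\theta_{0})\varphi_{T}(\theta_{0})=\widetilde{\varphi}_{T}(\theta_{0})^{\ast}G_{T}\widetilde{\varphi}_{T}(\theta_{0})$ and $\langle\varphi_{T}(\theta_{0})u,\int_{0}^{T}v_{t}\,\mathrm{d}W_{t}\rangle_{\mathbb{R}^{2}}=\langle u,\Delta_{T}(\theta_{0})\rangle_{\mathbb{R}^{2}}$, so the log-likelihood ratio is \emph{exactly} $\langle u,\Delta_{T}(\theta_{0})\rangle_{\mathbb{R}^{2}}-\tfrac{1}{2}\langle u,\widetilde{\varphi}_{T}(\theta_{0})^{\ast}G_{T}\widetilde{\varphi}_{T}(\theta_{0})u\rangle_{\mathbb{R}^{2}}$.

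\textbf{Step 2 (limit theorems).} By (\ref{property_varphi_tilde}) we have $\widetilde{\varphi}_{T}(\theta_{0})\to\overline{\varphi}(\theta_{0})$, so it suffices to prove $G_{T}\stackrel{\mathbb{P}}{\to}D:=\mathrm{diag}((2\alpha_{0})^{-1},(\sigma^{2}\lambda_{H})^{-1})$ and the joint convergence $(M_{T}^{T},N_{T}^{T})\stackrel{\mathcal{L}}{\to}\mathcal{N}(0,D)$; these are precisely the ingredients of the proof of Theorem~\ref{theorem_asymptotic_normality} (see also \cite{Chiba-2020,Tanaka-Xiao-Yu-2019}), namely $\langle N^{T}\rangle_{T}=T^{2H-2}\int_{0}^{T}\beta_{s}[1]^{2}\,\mathrm{d}s\to(\sigma^{2}\lambda_{H})^{-1}$ by a direct computation since $\beta_{s}[1]$ is an explicit constant times $s^{1/2-H}$, $\langle M^{T}\rangle_{T}=T^{-1}\int_{0}^{T}\beta_{s}[\mathrm{id}-\mu_{0}]^{2}\,\mathrm{d}s\stackrel{\mathbb{P}}{\to}(2\alpha_{0})^{-1}$ by an ergodic theorem for the process $s\mapsto\beta_{s}[\mathrm{id}-\mu_{0}](\mathbf{X}^{\theta_{0}})$, and $\langle M^{T},N^{T}\rangle_{T}\stackrel{\mathbb{P}}{\to}0$, while the joint CLT follows from the martingale central limit theorem because $M^{T}$ and $N^{T}$ are continuous $(\mathcal{F}_{t})$-martingales. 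Consequently $\widetilde{\varphi}_{T}(\theta_{0})^{\ast}G_{T}\widetilde{\varphi}_{T}(\theta_{0})\stackrel{\mathbb{P}}{\to}\overline{\varphi}(\theta_{0})^{\ast}D\overline{\varphi}(\theta_{0})=\mathcal{I}(\theta_{0})$ and $\Delta_{T}(\theta_{0})=-\widetilde{\varphi}_{T}(\theta_{0})^{\ast}(M_{T}^{T},N_{T}^{T})\stackrel{\mathcal{L}}{\to}-\overline{\varphi}(\theta_{0})^{\ast}\mathcal{N}(0,D)=\mathcal{N}(0,\mathcal{I}(\theta_{0}))$; together with Step~1 this proves (\ref{LAN_parameterization2}) and the asserted limit law of $\Delta_{T}(\theta_{0})$ (positive definiteness of $\mathcal{I}(\theta_{0})$ comes from condition~(6) of Assumption~\ref{assumption_rate_matrix} and positivity of $D$).

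\textbf{Step 3 (transfer to (\ref{parameterization1})).} Observe that $\zeta_{T}(\theta_{0})-\Gamma_{T}(\theta_{0})g(\theta_{0})=\int_{0}^{T}v_{t}\,\mathrm{d}W_{t}$, because $\langle v_{t},g(\theta_{0})\rangle_{\mathbb{R}^{2}}=\beta_{t}(\theta_{0})$ by (\ref{representation_beta_g}) and $\mathrm{d}Y_{t}^{\theta_{0}}=\beta_{t}(\theta_{0})\,\mathrm{d}t+\mathrm{d}W_{t}$ by (\ref{definition_Y}); hence for any $\theta$ the log-likelihood ratio equals $\langle\delta,\int_{0}^{T}v_{t}\,\mathrm{d}W_{t}\rangle_{\mathbb{R}^{2}}-\tfrac{1}{2}\langle\delta,\Gamma_{T}(\theta_{0})\delta\rangle_{\mathbb{R}^{2}}$ with $\delta:=g(\theta)-g(\theta_{0})$. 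Take $\theta=\theta_{0}+\Psi_{T}(\theta_{0})u$ with $\Psi_{T}(\theta_{0})=J_{g}(\theta_{0})^{-1}\varphi_{T}(\theta_{0})$. Since $g(\alpha,\mu)=(\alpha,\alpha\mu)$ is a quadratic polynomial, its second-order Taylor expansion at $\theta_{0}$ is exact, whence $\delta=J_{g}(\theta_{0})\Psi_{T}(\theta_{0})u+r_{T}=\varphi_{T}(\theta_{0})u+r_{T}$ with $r_{T}=(0,(\Psi_{T}(\theta_{0})u)_{1}(\Psi_{T}(\theta_{0})u)_{2})$. Conditions~(1)--(4) of Assumption~\ref{assumption_rate_matrix} yield $(\Psi_{T}(\theta_{0})u)_{1}=O(T^{-1/2})$ and $(\Psi_{T}(\theta_{0})u)_{2}=O(T^{H-1})$, so $r_{T}=(0,O(T^{H-3/2}))$; combining this with $\int_{0}^{T}v_{t}\,\mathrm{d}W_{t}=-A(M_{T}^{T},N_{T}^{T})=O_{\mathbb{P}}(T^{1-H})$ and the factorization $\Gamma_{T}(\theta_{0})=BRG_{T}RB^{\ast}$ with $G_{T}=O_{\mathbb{P}}(1)$, the correction terms $\langle r_{T},\int_{0}^{T}v_{t}\,\mathrm{d}W_{t}\rangle_{\mathbb{R}^{2}}$, $\langle r_{T},\Gamma_{T}(\theta_{0})\varphi_{T}(\theta_{0})u\rangle_{\mathbb{R}^{2}}$ and $\langle r_{T},\Gamma_{T}(\theta_{0})r_{T}\rangle_{\mathbb{R}^{2}}$ are each $o_{\mathbb{P}}(1)$. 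Hence the log-likelihood ratio under (\ref{parameterization1}) agrees with that of Step~1 up to $o_{\mathbb{P}}(1)$, giving (\ref{LAN_parameterization1}) with the same $\Delta_{T}(\theta_{0})$ and $\mathcal{I}(\theta_{0})$.

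Apart from Step~2 the argument is exact algebra; the main obstacle is the block of limit theorems in Step~2, in particular the ergodic limit for $\langle M^{T}\rangle_{T}$ and the vanishing of the cross term $\langle M^{T},N^{T}\rangle_{T}$, which require careful control of the singular-kernel functional $s\mapsto\beta_{s}[\mathrm{id}-\mu_{0}](\mathbf{X}^{\theta_{0}})$ and its long-time behaviour. These, however, are exactly the estimates performed in the proof of Theorem~\ref{theorem_asymptotic_normality}, so here they may be quoted rather than redone.
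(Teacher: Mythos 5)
Your proof is correct and follows essentially the same route as the paper: the exact quadratic expansion of the log-likelihood ratio in $g(\theta)-g(\theta_{0})$ (the paper's (\ref{likelihood_ratio_W})), the identification $\varphi_{T}(\theta_{0})^{\ast}\Gamma_{T}(\theta_{0})\varphi_{T}(\theta_{0})=\widetilde{\varphi}_{T}(\theta_{0})^{\ast}G_{T}\widetilde{\varphi}_{T}(\theta_{0})$, the limit block (\ref{key_CLT}) quoted from the proof of Theorem~\ref{theorem_asymptotic_normality} via Proposition~\ref{proposition_Fisher_mean-reverting} and (\ref{N_quad_var}), and a Taylor expansion of $g$ for the second parameterization. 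The only (harmless, arguably cleaner) deviation is in Step~3, where you exploit that $g(\alpha,\mu)=(\alpha,\alpha\mu)$ is exactly quadratic to write the remainder explicitly as $(0,(\Psi_{T}(\theta_{0})u)_{1}(\Psi_{T}(\theta_{0})u)_{2})=(0,O(T^{H-3/2}))$, whereas the paper uses the integral form of the Jacobian remainder together with Frobenius/operator-norm bounds to get the same $O(T^{-1/2})$ control.
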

%%%%%
\begin{remark}
	(\ref{LAN_parameterization2}) means that the family of distributions $(P_{g^{-1}(\tilde{\theta})}^{T})_{\tilde{\theta}\in(0,\infty)\times\mathbb{R}^{2}}$, which are corresponding to those of the ergodic fOU process under the parametrization~(\ref{parameterization2}), enjoys the LAN property at the point $g(\theta_{0})=(\alpha_{0},\gamma_{0})\in(0,\infty)\times\mathbb{R}$ for the sequence of non-degenerate matrices $(\varphi_{T}(\theta_{0}))_{T\in(0,\infty)}$ and the positive definite matrix $\mathcal{I}(\theta_{0})$.
\end{remark}
%%%%%%%%%%%%%%%%%%%%
\section{Asymptotic Efficiency of MLEs}\label{section_lower_bounds}
In this section, we derive asymptotically efficient rates and variances of estimators of $\alpha$, $\gamma$ and $\mu$ respectively based on the LAN property proved in Theorem~\ref{theorem_LAN}.
First of all, we recall the H\'ajek-Le Cam asymptotic minimax theorem which gives us asymptotic minimax lower bounds of estimators.
\begin{theorem}[The H\'ajek-Le Cam asymptotic minimax theorem~(\cite{Hajek1972,LeCam1972,Ibragimov-Hasminski-1981})]\label{Thm_MinMax}
	Let $\Theta\subset\mathbb{R}^d$ and a family of distributions $\{P_{\theta}^{T}\}_{\theta\in\Theta}$ on some measurable spaces $(\mathcal{X}_{T},\mathcal{A}_{T})$ satisfy the LAN property at $\theta_{0}\in\Theta$ as $T\to\infty$ for a sequence of regular matrices $(\varphi_{T}(\theta_{0}))_{T\in(0,\infty)}$ satisfying $\mathrm{Tr}[\varphi_{T}(\theta_{0})\varphi_{T}(\theta_{0})^{\ast}]\to 0$ and a positive definite matirx $\mathcal{I}(\theta_{0})$. Then we obtain
	\begin{equation*}
		\varliminf_{T\to\infty}\sup_{|\theta-\theta_{0}|<r}E_{\theta}^{T}\left[w\left(\varphi_{T}(\theta_{0})^{-1}(\widehat{\theta}_{T}-\theta)\right)\right]
		\geq\int_{\mathbb{R}^d}w\left(\mathcal{I}(\theta_{0})^{-\frac{1}{2}}z\right)\phi_{d}(z)\,\mathrm{d}z
	\end{equation*}
	for each $r>0$, any sequence of estimators $\widehat{\theta}_{T}$ and any symmetric, non-negative quasi-convex\footnote{
		A function $w:\mathbb{R}^d\to\mathbb{R}$ is quasi-convex if the set $\{x\in\mathbb{R}^d:w(x)\leq c\}$ is convex for each $c>0$.
	} function $w$ with
	\begin{equation*}
	\lim_{|z|\to\infty}e^{-\varepsilon|z|^2}w(z)=0
	\end{equation*}
	for all $\varepsilon>0$, where $\phi_{d}(\cdot)$ is the probability density function of the d-dimensional standard normal distribution.
\end{theorem}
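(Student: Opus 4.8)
This is the classical H\'ajek--Le~Cam local asymptotic minimax bound; accordingly, the plan is to reduce it, via the LAN hypothesis, to the exact minimax risk of the limiting Gaussian shift experiment $\{\mathcal N(u,\mathcal I(\theta_0)^{-1}):u\in\mathbb R^d\}$, leaning on the standard machinery (Le~Cam's theory of convergence of experiments, Anderson's lemma) and, for the most technical steps, ultimately citing \cite{Hajek1972,LeCam1972,Ibragimov-Hasminski-1981} rather than reproving everything. First I would localize: since $\mathrm{Tr}[\varphi_T(\theta_0)\varphi_T(\theta_0)^{\ast}]\to 0$, for every fixed $C>0$ and all large $T$ the points $\theta_0+\varphi_T(\theta_0)u$ with $|u|\le C$ lie in $\{|\theta-\theta_0|<r\}$; substituting $\theta=\theta_0+\varphi_T(\theta_0)u$ and writing $Z_T:=\varphi_T(\theta_0)^{-1}(\widehat\theta_T-\theta_0)$ gives
\begin{equation*}
	\sup_{|\theta-\theta_0|<r}E_\theta^T\!\left[w\!\left(\varphi_T(\theta_0)^{-1}(\widehat\theta_T-\theta)\right)\right]\ \ge\ \sup_{|u|\le C}E_{\theta_0+\varphi_T(\theta_0)u}^T\!\left[w(Z_T-u)\right],
\end{equation*}
so it suffices to bound the right-hand side from below and afterwards let $C\to\infty$.

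Next I would pass to the limit experiment through a Bayesian detour. Bound the supremum over $|u|\le C$ below by $\int E_{\theta_0+\varphi_T(\theta_0)u}^T[w_M(Z_T-u)]\,\pi(\mathrm{d}u)$ for the truncated loss $w_M:=w\wedge M$ and an arbitrary prior $\pi$ supported in $\{|u|\le C\}$. The LAN expansion of $\mathrm{d}P_{\theta_0+\varphi_T(\theta_0)u}^T/\mathrm{d}P_{\theta_0}^T$ about $P_{\theta_0}^T$, with its attendant contiguity, forces the relevant joint laws to be tight along subsequences, and every weak limit realizes a (possibly randomized) estimator $\widehat u$ of the shift in the Gaussian shift model; hence along such a subsequence
\begin{equation*}
	\liminf_{T\to\infty}\int E_{\theta_0+\varphi_T(\theta_0)u}^T[w_M(Z_T-u)]\,\pi(\mathrm{d}u)\ \ge\ \inf_{\widehat u}\int E_u\!\left[w_M(\widehat u-u)\right]\pi(\mathrm{d}u).
\end{equation*}
Letting $M\to\infty$, then spreading $\pi$ out (e.g.\ $\pi=\mathcal N(0,\tau^2 I_d)$, $\tau\to\infty$), and finally $C\to\infty$, reduces the statement to computing the minimax risk of the Gaussian shift experiment under the loss $w$.

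Finally I would compute that minimax risk. In $\{\mathcal N(u,\mathcal I(\theta_0)^{-1})\}$ the estimator ``observation itself'' has the constant risk $E\,w(\mathcal I(\theta_0)^{-1/2}Z)=\int_{\mathbb R^d}w(\mathcal I(\theta_0)^{-1/2}z)\phi_d(z)\,\mathrm{d}z$, so the minimax risk is at most this; conversely it dominates any Bayes risk, and under a $\mathcal N(0,\tau^2 I_d)$ prior the posterior is Gaussian with covariance $\Sigma_\tau:=(\mathcal I(\theta_0)+\tau^{-2}I_d)^{-1}\uparrow\mathcal I(\theta_0)^{-1}$, whose Bayes risk equals the (data-free) posterior risk $E\,w(\Sigma_\tau^{1/2}Z)$, the posterior mean being Bayes-optimal because $w$ is bowl-shaped, by Anderson's lemma, and increases to $\int w(\mathcal I(\theta_0)^{-1/2}z)\phi_d(z)\,\mathrm{d}z$ as $\tau\to\infty$. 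Thus the Gaussian minimax risk equals the claimed lower bound. (Alternatively, minimaxity of the identity estimator follows from the Hunt--Stein theorem together with Anderson's lemma applied to translation-equivariant estimators.)

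I expect the passage to the limit experiment to be the real obstacle: $\widehat\theta_T$ is an arbitrary sequence, so $Z_T$ need not be tight and $w$ is unbounded. Handling this is exactly what the truncation $w\mapsto w\wedge M$ is for, and the two structural hypotheses on $w$ are tailored to it, namely the subexponential growth $\lim_{|z|\to\infty}e^{-\varepsilon|z|^2}w(z)=0$, which guarantees that truncating $w$ costs nothing in the Gaussian computation as $M\to\infty$, and quasi-convexity (bowl-shapedness), which is precisely the hypothesis of Anderson's lemma. These reductions are routine in spirit but genuinely delicate in execution, and for a fully rigorous treatment I would defer to the accounts of the local asymptotic minimax theorem in \cite{Ibragimov-Hasminski-1981} and in van der Vaart's monograph on asymptotic statistics.
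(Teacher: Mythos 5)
The paper does not prove this theorem at all: it is quoted verbatim as a classical result and attributed to \cite{Hajek1972,LeCam1972,Ibragimov-Hasminski-1981}, so there is no in-paper argument to compare yours against. Your outline is the standard textbook proof (localization via $\mathrm{Tr}[\varphi_T\varphi_T^{\ast}]\to 0$, Bayesian lower bound with truncated loss, passage to the Gaussian shift limit experiment, Anderson's lemma), and it correctly identifies the roles of the two hypotheses on $w$; the one imprecision is the remark that contiguity ``forces the relevant joint laws to be tight'' --- it does not, since $\widehat\theta_T$ is arbitrary, and the standard fix is to work with the bounded loss $w_M$ and vague limits on a compactification --- but you flag exactly this difficulty yourself and defer it to the cited sources, which is precisely what the authors do.
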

%%%%%%%%%%%%%%%%%%%%%%%%%%%%%%%%%%%%%%%%%%%%%%%%%%%%%%%%%%%%%%%%%%%%%%%%%%%%%%%%%%%%%%%%%%%%%%%%%%%
\subsection{Asymptotically Efficient Rate and Variance of Estimating $\alpha$}
As the rate matrix $\varphi_{T}(\theta_{0})$, let us take the matrix whose elements are defined by (\ref{example_rate_matrix1}):
\begin{equation}\label{example_rate_matrix1_ex}
	\varphi_{T}(\theta_{0})=
	\begin{pmatrix}
		1&0\\
		\mu_{0}&-1
	\end{pmatrix}
	\mathrm{diag}(\sqrt{T},T^{1-H})^{-1}.
\end{equation}
It is worth mentioning that the rate matrix is non-diagonal and depends on the parameter $\mu_{0}$.
Using Theorems~\ref{theorem_LAN} and \ref{Thm_MinMax}, we obtain
\begin{equation*}
	\varliminf_{T\to\infty}\sup_{|\theta-\theta_{0}|<r}E_{\theta}^{T}\left[w\left(\varphi_{T}(\theta_{0})^{-1}(\widehat{g(\theta)}_{T}-g(\theta))\right)\right]\geq\int_{\mathbb{R}^2}w\left(
	\mathrm{diag}(2\alpha_{0},\sigma^{2}\lambda_{H})^{\frac{1}{2}}
	z\right)\phi_{2}(z)\,\mathrm{d}z
\end{equation*}
for each $r>0$, any sequence of estimators $\widehat{g(\theta)}_{T}=(\widehat{\alpha}_{T},\widehat{\gamma}_{T})$ and any loss function $w$ satisfying the conditions given in Theorem~\ref{Thm_MinMax}.
Since
\begin{equation*}
	\varphi_{T}(\theta_{0})^{-1}=\mathrm{diag}(\sqrt{T},T^{1-H})
	\begin{pmatrix}
		1&0 \\
		\mu_{0}&-1
	\end{pmatrix},
\end{equation*}
we obtain the asymptotic minimax lower bound
\begin{equation*}
	\varliminf_{T\to\infty}\sup_{|\theta-\theta_{0}|<r}E_{\theta}^{T}[T(\widehat{\alpha}_{T}-\alpha)^2]\geq 2\alpha_{0}
\end{equation*}
by taking $w(x,y)=x^2$.
%%%%%%%%%%%%%%%%%%%%%%%%%%%%%%%%%%%%%%%%%%%%%%%%%%%%%%%%%%%%%%%%%%%%%%%%%%%%%%%%%%%%%%%%%%%%%%%%%%%
\subsection{Asymptotically Efficient Rate and Variance of Estimating $\gamma$}
As the rate matrix $\varphi_{T}(\theta_{0})$, let us take the matrix whose elements are defined by (\ref{example_rate_matrix2}):
\begin{equation*}
	\varphi_{T}(\theta_{0})=
	\begin{pmatrix}
		\mu_{0}^{-1}&1 \\
		0&\mu_{0}
	\end{pmatrix}
	\mathrm{diag}(T^{1-H},\sqrt{T})^{-1}.
\end{equation*}
It is worth repeating that the rate matrix is non-diagonal and depends on the parameter $\mu_{0}$.
Using Theorems~\ref{theorem_LAN} and \ref{Thm_MinMax}, we obtain
\begin{equation*}
	\varliminf_{T\to\infty}\sup_{|\theta-\theta_{0}|<r}E_{\theta}^{T}\left[w\left(\varphi_{T}(\theta_{0})^{-1}(\widehat{g(\theta)}_{T}-g(\theta))\right)\right]
	\geq\int_{\mathbb{R}^2}w\left(
	\mathrm{diag}(\sigma^{2}\lambda_{H},2\alpha_{0})^{\frac{1}{2}}
	z\right)\phi_{2}(z)\,\mathrm{d}z
\end{equation*}
for each $r>0$, any sequence of estimators $\widehat{g(\theta)}_{T}=(\widehat{\alpha}_{T},\widehat{\gamma}_{T})$ and any loss function $w$ satisfying the conditions given in Theorem~\ref{Thm_MinMax} because we have
\begin{equation*}
	\mathcal{I}(\theta_{0})^{-1}=
	\begin{pmatrix}
		0&1\\
		1&0
	\end{pmatrix}
	\mathrm{diag}(2\alpha_{0},\sigma^{2}\lambda_{H})
	\begin{pmatrix}
		0&1\\
		1&0
	\end{pmatrix}
	=
	\begin{pmatrix}
		\sigma^{2}\lambda_{H}&0\\
		0&2\alpha_{0}
	\end{pmatrix}.
\end{equation*}
Since
\begin{equation*}
	\varphi_{T}(\theta_{0})^{-1}
	=\mathrm{diag}(T^{1-H},\sqrt{T})
	\begin{pmatrix}
		\mu_{0}&-1 \\
		0&\mu_{0}^{-1}
	\end{pmatrix},
\end{equation*}
we obtain the asymptotic minimax lower bound
\begin{equation*}
	\varliminf_{T\to\infty}\sup_{|\theta-\theta_{0}|<r}E_{\theta}^{T}[T(\widehat{\gamma}_{T}-\gamma)^2]\geq 2\alpha_{0}\mu_{0}^{2}=\frac{2\gamma_{0}^{2}}{\alpha_{0}}
\end{equation*}
by taking $w(x,y)=y^2$.
%%%%%%%%%%%%%%%%%%%%%%%%%%%%%%%%%%%%%%%%%%%%%%%%%%%%%%%%%%%%%%%%%%%%%%%%%%%%%%%%%%%%%%%%%%%%%%%%%%%
\subsection{Asymptotically Efficient Rate and Variance of Estimating $\mu$}
As the rate matrix $\varphi_{T}(\theta_{0})$, let us take the same one as (\ref{example_rate_matrix1_ex}) which gives
\begin{equation*}
	\Psi_{T}(\theta_{0})=\mathrm{diag}(\sqrt{T},-\alpha_{0}T^{1-H})^{-1},
\end{equation*}
see Remark~\ref{remark_Psi_diagonal}. Using Theorems~\ref{theorem_LAN} and \ref{Thm_MinMax}, we obtain
\begin{equation*}
	\varliminf_{T\to\infty}\sup_{|\theta-\theta_{0}|<r}E_{\theta}^{T}\left[w\left(\Psi_{T}(\theta_{0})^{-1}(\widehat{\theta}_{T}-\theta)\right)\right]
	\geq\int_{\mathbb{R}^2}w\left(
	\mathrm{diag}(2\alpha_{0},\sigma^{2}\lambda_{H})^{\frac{1}{2}}
	z\right)\phi_{2}(z)\,\mathrm{d}z
\end{equation*}
for each $r>0$, any sequence of estimators $\widehat{\theta}_{T}=(\widehat{\alpha}_{T},\widehat{\mu}_{T})$ and any loss function $w$ satisfying the conditions given in Theorem~\ref{Thm_MinMax}.
Since
\begin{equation*}
	\Psi_{T}(\theta_{0})^{-1}=\mathrm{diag}(\sqrt{T},-\alpha_{0}T^{1-H}),
\end{equation*}
we obtain the asymptotic minimax lower bound 
\begin{equation*}
	\varliminf_{T\to\infty}\sup_{|\theta-\theta_{0}|<r}E_{\theta}^{T}[T^{2(1-H)}(\widehat{\mu}_{T}-\mu)^2]\geq\frac{\sigma^{2}\lambda_{H}}{\alpha_{0}^{2}}
\end{equation*}
by taking $w(x,y)=y^{2}$.
%%%%%%%%%%
\section{Proof of Main Results}\label{section_proof_main_results}

\subsection{Preliminary Results}
The following lemma is used in the proofs of the main results; its proof will be given in Section~\ref{section_proof_preliminary_results}.
\begin{proposition}\label{fOU_observed_Fisher}\label{proposition_Fisher_mean-reverting}
	For any $p>1$ and $H\in(0,1/2)$, there exists a constant $\kappa(H)\in(0,1)$ such that
\begin{align}
	\mathbb{E}\left[\left|\langle M^{T}\rangle_{T}-\frac{1}{2\alpha_{0}}\right|^p\right]&\lesssim T^{-\kappa(H)p},\label{0928ineq1}\\
	\mathbb{E}\left[\left|\langle M^{T},N^{T}\rangle_{T}\right|^p \right]&\lesssim T^{-\kappa(H)p}.\label{0928ineq2}
\end{align}
\end{proposition}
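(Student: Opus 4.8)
The plan is to reduce both estimates to the behaviour of the Gaussian process $\psi_{s}:=\beta_{s}[\mathrm{id}-\mu_{0}](\mathbf{X}_{T}^{\theta_{0}})$ sitting inside the martingales, and then to exploit that — up to a polynomially small error — this process is a standard Ornstein--Uhlenbeck process driven by $W$. Since the stochastic integrals defining $M^{T},N^{T}$ are taken against $W$, one has the elementary identities
\begin{equation*}
	\langle M^{T}\rangle_{T}=\frac{1}{T}\int_{0}^{T}\psi_{s}^{2}\,\mathrm{d}s,
	\qquad
	\langle M^{T},N^{T}\rangle_{T}=c_{H}\,T^{H-3/2}\int_{0}^{T}\psi_{s}\,s^{1/2-H}\,\mathrm{d}s ,
\end{equation*}
where $c_{H}$ is the constant for which $\beta_{s}[1]=c_{H}s^{1/2-H}$ (it is likewise elementary that $\langle N^{T}\rangle_{T}$ is the deterministic constant $c_{H}^{2}/(2-2H)=(\sigma^{2}\lambda_{H})^{-1}$, so no rate is needed there). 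Because $X_{u}^{\theta_{0}}-\bar{X}_{u}^{\theta_{0}}=e^{-\alpha_{0}u}(x_{0}-\bar{X}_{0}^{\theta_{0}})$, linearity of $f\mapsto\beta_{s}[f]$ gives $\psi_{s}=\phi_{s}+(x_{0}-\bar{X}_{0}^{\theta_{0}})\,g_{s}$, where $\phi_{s}:=\beta_{s}[\mathrm{id}-\mu_{0}](\bar{\mathbf{X}}^{\theta_{0}})$ (with $\bar{\mathbf{X}}^{\theta_{0}}:=(\bar{X}_{t}^{\theta_{0}})_{t\geq0}$) is centred Gaussian and $g_{s}:=\sigma^{-1}\bar{d}_{H}^{-1}s^{H-1/2}\int_{0}^{s}(s-u)^{-1/2-H}u^{1/2-H}e^{-\alpha_{0}u}\,\mathrm{d}u$ is deterministic with $|g_{s}|\lesssim(1+s)^{-1}$, hence $\int_{0}^{T}|g_{s}|\,\mathrm{d}s\lesssim\log T$, $\int_{0}^{T}g_{s}^{2}\,\mathrm{d}s\lesssim1$ and $\int_{0}^{T}|g_{s}|s^{1/2-H}\,\mathrm{d}s\lesssim T^{1/2-H}$. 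Expanding $\langle M^{T}\rangle_{T}$ and $\langle M^{T},N^{T}\rangle_{T}$ via this decomposition, every term containing $g_{s}$ lies in the sum of the Wiener chaoses of order at most two, so Nelson's hypercontractivity bounds its $L^{p}$-norm by its $L^{2}$-norm, which the above integral estimates make $O(T^{-1}\log T)$; it therefore suffices to prove the two estimates with $\psi_{s}$ replaced by $\phi_{s}$.

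The structural input is the moving-average representation $\phi_{s}=\sigma^{-1}\bar{d}_{H}^{-1}\int_{0}^{s}r^{-1/2-H}(1-r/s)^{1/2-H}(\bar{X}_{s-r}^{\theta_{0}}-\mu_{0})\,\mathrm{d}r$, obtained by the substitution $u=s-r$. Dropping the factor $(1-r/s)^{1/2-H}$ and letting the upper limit tend to $\infty$ defines a stationary Gaussian process $\tilde{\phi}_{s}:=\sigma^{-1}\bar{d}_{H}^{-1}\int_{0}^{\infty}r^{-1/2-H}(\bar{X}_{s-r}^{\theta_{0}}-\mu_{0})\,\mathrm{d}r$, which has finite variance precisely because $H<1/2$ makes $|c(\tau)|\lesssim(1+|\tau|)^{2H-2}$ integrable. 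Computing the spectral density of $\tilde{\phi}$ as $|\hat{k}(\lambda)|^{2}$ times that of $\bar{X}^{\theta_{0}}$, the powers $|\lambda|^{-(1-2H)}$ and $|\lambda|^{1-2H}$ cancel and one is left with $C_{H}/(\alpha_{0}^{2}+\lambda^{2})$; a short computation with the reflection formula for $\Gamma$ and the explicit value of $\bar{d}_{H}$ gives $C_{H}=1/(2\pi)$, i.e.\ $\tilde{\phi}$ has exactly the covariance of the stationary standard Ornstein--Uhlenbeck process, $\mathbb{E}[\tilde{\phi}_{s}\tilde{\phi}_{t}]=\tfrac{1}{2\alpha_{0}}e^{-\alpha_{0}|s-t|}$, and in particular $\mathbb{E}[\tilde{\phi}_{s}^{2}]=\tfrac{1}{2\alpha_{0}}$. (Alternatively the value $\tfrac{1}{2\alpha_{0}}$ can be taken from the known convergence $\langle M^{T}\rangle_{T}\stackrel{\mathbb{P}}{\to}\tfrac{1}{2\alpha_{0}}$ of \cite{Tanaka-Xiao-Yu-2019,Chiba-2020}.)

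The main obstacle is the quantitative comparison of $\phi_{s}$ with $\tilde{\phi}_{s}$. Splitting the $r$-integral at a cutoff $R\le s/2$: for $r\le R$ the integrand of $\phi_{s}$ differs from that of $\tilde{\phi}_{s}$ only through $(1-r/s)^{1/2-H}-1=O(r/s)$, contributing $O(R^{3/2-H}/s)$ in $L^{2}$; the pieces coming from $r\in[R,s]$ in $\phi_{s}$ and from $r\ge R$ in $\tilde{\phi}_{s}$ are each $O(R^{-H})$ in $L^{2}$, the key step being $\int_{R}^{\infty}r'^{-1/2-H}|c(r-r')|\,\mathrm{d}r'\lesssim r^{-1/2-H}$ for $r\ge R$ (which again uses $\int_{\mathbb{R}}|c|<\infty$, i.e.\ $H<1/2$) followed by $\int_{R}^{\infty}r^{-1-2H}\,\mathrm{d}r\lesssim R^{-2H}$. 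Choosing $R\asymp s^{2/3}$ yields $\|\phi_{s}-\tilde{\phi}_{s}\|_{2}\lesssim s^{-2H/3}$ and $\sup_{s}\|\phi_{s}\|_{2}<\infty$; this bookkeeping with the singular Beta-type kernels against the fOU covariance is where the real work lies, the remaining steps being routine.

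Finally, $\|\phi_{s}-\tilde{\phi}_{s}\|_{2}\lesssim s^{-2H/3}$ gives $|\mathbb{E}[\phi_{s}^{2}]-\tfrac{1}{2\alpha_{0}}|\lesssim s^{-2H/3}$ and $|\mathbb{E}[\phi_{s}\phi_{t}]-\tfrac{1}{2\alpha_{0}}e^{-\alpha_{0}|s-t|}|\lesssim s^{-2H/3}+t^{-2H/3}$. Hence the bias $|\mathbb{E}[\langle M^{T}\rangle_{T}]-\tfrac{1}{2\alpha_{0}}|\lesssim\tfrac{1}{T}\int_{0}^{T}s^{-2H/3}\,\mathrm{d}s+O(T^{-1}\log T)\lesssim T^{-2H/3}$, and, since $\phi$ is centred Gaussian, $\mathrm{Var}[\langle M^{T}\rangle_{T}]=\tfrac{2}{T^{2}}\int_{0}^{T}\!\!\int_{0}^{T}\mathbb{E}[\phi_{s}\phi_{t}]^{2}\,\mathrm{d}s\,\mathrm{d}t+O(T^{-2}(\log T)^{2})\lesssim T^{-4H/3}$, because the exponential term integrates to $O(T)$ and the error term to $O(T^{2-4H/3})$. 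As $\langle M^{T}\rangle_{T}-\tfrac{1}{2\alpha_{0}}$ lies in the sum of the Wiener chaoses of order at most two, hypercontractivity upgrades $L^{2}$ to $L^{p}$ and yields \eqref{0928ineq1} with $\kappa(H)=2H/3$. For \eqref{0928ineq2}, the mean of $c_{H}T^{H-3/2}\int_{0}^{T}\psi_{s}s^{1/2-H}\,\mathrm{d}s$ is $O(T^{-1})$ and its centred part is a first-chaos variable with $L^{2}$-norm $c_{H}T^{H-3/2}\bigl(\int_{0}^{T}\!\!\int_{0}^{T}s^{1/2-H}t^{1/2-H}\mathbb{E}[\phi_{s}\phi_{t}]\,\mathrm{d}s\,\mathrm{d}t\bigr)^{1/2}\lesssim T^{H-3/2}\cdot T^{(3-8H/3)/2}=T^{-H/3}$, using the same decay bound for $\mathbb{E}[\phi_{s}\phi_{t}]$; hypercontractivity then gives \eqref{0928ineq2} with $\kappa(H)=H/3$. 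Taking $\kappa(H):=H/3\in(0,1)$ — the exponent is not optimised — completes the proof.
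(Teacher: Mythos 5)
Your proposal is correct, but it reaches Proposition~\ref{fOU_observed_Fisher} by a genuinely different route in the central estimate. The skeleton is shared: both arguments strip off the factor $(1-r/t)^{1/2-H}$ from the kernel, reduce to the stationary solution, control a bias and a variance, and finish with hypercontractivity on the second chaos; and both ultimately rest on the same two analytic facts, namely $\int_{\mathbb{R}}|c|<\infty$ (i.e.\ $H<1/2$) and the integrability of $r^{-1/2-H}$ near $0$. The differences are these. (i) You replace the citation of Lemma~4.2 of \cite{Chiba-2020} by the exact identity $X_{u}^{\theta_{0}}-\bar{X}_{u}^{\theta_{0}}=e^{-\alpha_{0}u}(x_{0}-\bar{X}_{0}^{\theta_{0}})$, which makes the non-stationarity correction explicit (and in fact gives a better rate, $T^{-1}\log T$ instead of $T^{-2H}$). (ii) Where the paper imports the bias estimates (\ref{0406.ineq3})--(\ref{0406.ineq4}) from \cite{Chiba-2020} and then bounds the variance by a direct Wick/fourth-moment computation with the splitting $I_{1},I_{2},I_{3}$ and a H\"older exponent $r\in(1,(1/2+H)^{-1})$, you identify the idealized process $\tilde{\phi}$ as an \emph{exact} stationary Ornstein--Uhlenbeck process (the spectral factors $|\lambda|^{\pm(1-2H)}$ cancel, and the constant check against $\bar{d}_{H}$ does work out), prove the pointwise comparison $\|\phi_{s}-\tilde{\phi}_{s}\|_{2}\lesssim s^{-2H/3}$ by a cutoff optimization, and then integrate the explicit exponential covariance. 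This is more self-contained and makes the limit $\tfrac{1}{2\alpha_{0}}$ structurally transparent, at the price of an extra comparison lemma and a non-optimized exponent ($\kappa(H)=H/3$ versus the paper's $2H$-type rates; the proposition only asks for some $\kappa(H)\in(0,1)$, so this is immaterial). (iii) For (\ref{0928ineq2}) the paper simply invokes (4.2) of Theorem~4.1 of \cite{Chiba-2020}, whereas you give a self-contained first-chaos computation yielding $T^{-H/3}$. The only places where you compress real work are the kernel-versus-covariance estimates (the inequality $\int_{R}^{\infty}r'^{-1/2-H}|c(r-r')|\,\mathrm{d}r'\lesssim r^{-1/2-H}$ and the bound $\sup_{s}\|\phi_{s}\|_{2}<\infty$), but you state the correct key inequalities and they do check out, so I see no gap.
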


\begin{remark}
For any $T>0$, we have
\begin{equation}\label{N_quad_var}
	\langle N^{T}\rangle_{T} = d_{H}^{-2}\sigma^{-2}(2-2H)^{-1}\left(\int_{0}^{1} (1-s)^{-\frac{1}{2}-H}s^{\frac{1}{2}-H}\,\mathrm{d}s\right)^{2}
	=(\sigma^{2}\lambda_{H})^{-1}.
\end{equation} 
\end{remark}
%%%%%%%%%%%%%%%%%%%%
\subsection{Proof of Theorem~\ref{theorem_asymptotic_normality}}
	Using (\ref{theta_tilde_representation}), (\ref{property_varphi_tilde}) and the equality
	\begin{equation*}
		\begin{pmatrix}
			-\beta_{t}[\mathrm{id}](\mathbf{X}_{T}^{\theta_{0}})\\
			\beta_{t}[1]
		\end{pmatrix}
		=-
		\begin{pmatrix}
			\sqrt{T}&\mu_{0}T^{1-H}\\
			0&-T^{1-H}
		\end{pmatrix}
		\begin{pmatrix}
			T^{-1/2}\beta_{t}[\mathrm{id}-\mu_{0}](\mathbf{X}_{T}^{\theta_{0}})\\
			T^{H-1}\beta_{t}[1]
		\end{pmatrix},
	\end{equation*}
	we can rewrite
	\begin{equation}\label{rescaled_est_err}
		\varphi_{T}(\theta_{0})^{-1}(\widehat{g(\theta)}_{T}-g(\theta_{0}))
		=-\widetilde{\Gamma}_{T}
		(\theta_{0})^{-1}\widetilde{\varphi}_T(\theta_{0})^\ast
		\begin{pmatrix}
			M_{T}^{T}\\
			N_{T}^{T}
		\end{pmatrix},
	\end{equation}
	where 
	\begin{equation*}
		\widetilde{\Gamma}_{T}(\theta):=\varphi_{T}(\theta)^\ast\Gamma_{T}(\theta)\varphi_{T}(\theta)
		=\widetilde{\varphi}_{T}(\theta_{0})^\ast
		\begin{pmatrix}
			\langle M^{T}\rangle_{T}&\mathrm{sym.}\\
			\langle M^{T},N^{T}\rangle_{T}&\langle N^{T}\rangle_{T}
		\end{pmatrix}
		\widetilde{\varphi}_T(\theta_{0}).
	\end{equation*}
	Using Proposition~\ref{proposition_Fisher_mean-reverting} and (\ref{N_quad_var}), we can show
	\begin{equation*}
		\begin{pmatrix}
			\langle M^{T}\rangle_{T}&\mathrm{sym.}\\
			\langle M^{T},N^{T}\rangle_{T}&\langle N^{T}\rangle_{T}
		\end{pmatrix}
		\stackrel{\mathbb{P}}{\to}\mathrm{diag}\left((2\alpha_{0})^{-1},(\sigma^{2}\lambda_{H})^{-1}\right)\ \ \mbox{as $T\to\infty$}
	\end{equation*}
	so that, using (\ref{property_varphi_tilde}), the martingale CLT and Slutsky's theorem, we obtain
	\begin{equation}\label{key_CLT}
		-\widetilde{\varphi}_{T}(\theta_{0})^{\ast}
		\begin{pmatrix}
			M_{T}^{T}\\
			N_{T}^{T}
		\end{pmatrix}
		\stackrel{\mathcal{L}}{\rightarrow}\mathcal{N}\left(\mathrm{0},\mathcal{I}(\theta_{0})\right),\ \ \widetilde{\Gamma}_{T}(\theta_{0})\stackrel{\mathbb{P}}{\to}\mathcal{I}(\theta_{0})\ \ \mbox{as $T\to\infty$}.
	\end{equation}
	Therefore the conclusion follows from (\ref{key_CLT}) using Slutsky's theorem again.	
%%%%%%%%%%%%%%%%%%%%
\subsection{Proof of Theorem~\ref{corollary_asymptotic_normality}} 
For each $\epsilon>0$, we set
\[
A_{T,\epsilon} := \{|\widehat{g(\theta)}_{T} - g(\theta_{0}) | \geq \epsilon\}.
\]
We first prove the following lemma related to the deviation probability for $\widehat{g(\theta)}_{T}$.
\begin{lemma} \label{0928lemma1}
For any $q>0$ and $\epsilon>0$, $\lim_{T\to\infty} T^{q} \mathbb{P}\left[ A_{T,\epsilon} \right]  = 0.$
\end{lemma}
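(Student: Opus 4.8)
The plan is to turn the deviation event $A_{T,\epsilon}$ into a statement about two elementary objects — the terminal martingale values $M_T^T,N_T^T$ and the observed‑information matrix — and to show that each of them deviates from its deterministic limit with probability decaying faster than any power of $T$. Introduce the deterministic invertible matrix and the random matrix
\[
R_T:=\begin{pmatrix}\sqrt T & \mu_0T^{1-H}\\ 0 & -T^{1-H}\end{pmatrix},\qquad
\Lambda_T:=\begin{pmatrix}\langle M^T\rangle_T & \langle M^T,N^T\rangle_T\\ \langle M^T,N^T\rangle_T & \langle N^T\rangle_T\end{pmatrix}.
\]
By $(\ref{theta_tilde_representation})$ and the matrix identity used in the proof of Theorem~\ref{theorem_asymptotic_normality} one has $\int_0^T(-\beta_t[\mathrm{id}](\mathbf{X}_T^{\theta_0}),\beta_t[1])^\ast\,\mathrm{d}W_t=-R_T(M_T^T,N_T^T)^\ast$ and $\Gamma_T(\theta_0)=R_T\Lambda_TR_T^\ast$, hence
\[
\widehat{g(\theta)}_T-g(\theta_0)=-(R_T^\ast)^{-1}\Lambda_T^{-1}(M_T^T,N_T^T)^\ast .
\]
Since $H<1/2$, the operator norm of $(R_T^\ast)^{-1}$ is bounded by its Frobenius norm $\bigl((1+\mu_0^2)T^{-1}+T^{2H-2}\bigr)^{1/2}\lesssim T^{-1/2}$, so that on all of $\Omega$
\[
|\widehat{g(\theta)}_T-g(\theta_0)|\lesssim T^{-1/2}\,\|\Lambda_T^{-1}\|\,\bigl((M_T^T)^2+(N_T^T)^2\bigr)^{1/2}.
\]

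The next step is to bound $\|\Lambda_T^{-1}\|$ on a high‑probability event. Set $\Lambda_0:=\mathrm{diag}((2\alpha_0)^{-1},(\sigma^2\lambda_H)^{-1})$ and
\[
B_T:=\{\det\Lambda_T\ge\tfrac12\det\Lambda_0\}\cap\{\mathrm{Tr}\,\Lambda_T\le 2\,\mathrm{Tr}\,\Lambda_0\}.
\]
Note that $\Lambda_T$ is a.s.\ symmetric positive definite, since $\det\Lambda_T=\det\Gamma_T(\theta_0)/(\det R_T)^2>0$ a.s.\ and its diagonal entries are nonnegative; hence for this $2\times2$ matrix $\|\Lambda_T^{-1}\|=\lambda_{\max}(\Lambda_T)/\det\Lambda_T\le\mathrm{Tr}\,\Lambda_T/\det\Lambda_T$, which on $B_T$ is $\le 4\,\mathrm{Tr}(\Lambda_0)/\det(\Lambda_0)=:C_0$. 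Moreover, using $(\ref{N_quad_var})$, $\mathrm{Tr}\,\Lambda_T-\mathrm{Tr}\,\Lambda_0=\langle M^T\rangle_T-(2\alpha_0)^{-1}$ and $\det\Lambda_T-\det\Lambda_0=(\sigma^2\lambda_H)^{-1}(\langle M^T\rangle_T-(2\alpha_0)^{-1})-\langle M^T,N^T\rangle_T^2$, so Markov's inequality combined with $(\ref{0928ineq1})$ and $(\ref{0928ineq2})$ gives $\mathbb{P}[B_T^c]\lesssim T^{-\kappa(H)p}$ for every $p>1$.

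It remains to bound the martingale part. On $A_{T,\epsilon}\cap B_T$ the last display forces $(M_T^T)^2+(N_T^T)^2\ge c_\epsilon T$ for a constant $c_\epsilon>0$ depending only on $\epsilon$ (and on $\alpha_0,\mu_0,\sigma,H$). Since $M^T$ and $N^T$ are continuous square‑integrable martingales with $\mathbb{E}[\langle M^T\rangle_T^p]\lesssim 1$ (by $(\ref{0928ineq1})$) and $\langle N^T\rangle_T=(\sigma^2\lambda_H)^{-1}$, the Burkholder--Davis--Gundy inequality yields $\mathbb{E}[|M_T^T|^{2p}]+\mathbb{E}[|N_T^T|^{2p}]\lesssim 1$ for every $p\ge1$; hence by Markov's inequality $\mathbb{P}[A_{T,\epsilon}\cap B_T]\le\mathbb{P}[(M_T^T)^2+(N_T^T)^2\ge c_\epsilon T]\lesssim T^{-p}$. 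Combining with the previous paragraph, $\mathbb{P}[A_{T,\epsilon}]\le\mathbb{P}[A_{T,\epsilon}\cap B_T]+\mathbb{P}[B_T^c]\lesssim T^{-p}+T^{-\kappa(H)p}$, and choosing $p>q/\kappa(H)$ yields $T^q\mathbb{P}[A_{T,\epsilon}]\to 0$.

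The one genuinely delicate point is obtaining a uniform‑in‑$T$ bound on $\|\Lambda_T^{-1}\|$ outside an event of super‑polynomially small probability: this is exactly where Proposition~\ref{proposition_Fisher_mean-reverting} is used, and it is essential both that the rate $\kappa(H)$ there is strictly positive and that the moment estimates hold for arbitrarily large $p$. Everything else reduces to Markov's and Burkholder--Davis--Gundy's inequalities together with the deterministic identities $(\ref{N_quad_var})$ and $\Gamma_T(\theta_0)=R_T\Lambda_TR_T^\ast$.
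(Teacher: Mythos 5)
Your proof is correct and follows essentially the same route as the paper's: both express $\widehat{g(\theta)}_T-g(\theta_0)$ as an inverse observed-information factor of order $O_{\mathbb{P}}(1)$ times a $T^{-1/2}$-rescaled martingale vector, use Proposition~\ref{proposition_Fisher_mean-reverting} together with the deterministic value of $\langle N^T\rangle_T$ to rule out degeneracy outside an event of probability $O(T^{-\kappa(H)p})$, and finish with Markov's inequality for arbitrarily large $p$. The only difference is presentational — you control $\|\Lambda_T^{-1}\|$ via trace and determinant and invoke Burkholder--Davis--Gundy explicitly for the moments of $M_T^T$, whereas the paper writes out the $2\times 2$ cofactor formula and absorbs those factors into the vector $Z_T$.
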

\begin{proof}
First note that some straightforward calculations show
\[
\det\Gamma_{T}(\theta_{0}) = T^{3-2H}(\langle M^{T}\rangle_{T}\langle N^{T}\rangle_{T} - \langle M^{T}, N^{T}\rangle_{T}^{2})
\]
and 
\begin{align*}
	\widehat{g(\theta)}_{T} - g(\theta_{0}) &= \Gamma_{T}(\theta_{0})^{-1}\int_{0}^{T} 
			\begin{pmatrix}
			 -\beta_{t}[\mathrm{id}](\mathbf{X}_{T}^{\theta_{0}})\\
			 \beta_{t}[1]
			\end{pmatrix}\,\mathrm{d}W_{t} \\ 
			& = \frac{T^{-\frac{1}{2}}}{\langle M^{T}\rangle_{T}\langle N^{T}\rangle_{T} - \langle M^{T}, N^{T}\rangle_{T}^{2}}Z_{T},
\end{align*}
where 
\begin{equation*}
	Z_{T} :=  \begin{pmatrix} -\langle N^{T}\rangle_{T}M_{T}^{T} + \langle M^{T},N^{T}\rangle_{T}N^{T}_{T} \\ -T^{H-\frac{1}{2}}(\langle M^{T},N^{T}\rangle_{T}M_{T}^{T} - \langle M^{T}\rangle_{T}N_{T}^{T}) + \mu_{0}(\langle M^{T},N^{T}\rangle_{T}N_{T}^{T} - \langle N^{T}\rangle_{T}M_{T}^{T}) \end{pmatrix}.
\end{equation*}
Then, for any $p>1$, we have
\begin{align*}
\mathbb{P}\left[ A_{T,\epsilon} \right] &\leq \mathbb{P}\left[ \left|\frac{\det\Gamma_{T}(\theta_{0})}{T^{3-2H}} - \frac{\langle N^{T}\rangle_{T}}{2\alpha_{0}}\right| \geq \frac{\langle N^{T}\rangle_{T}}{4\alpha_{0}} \right] + \mathbb{P}\left[ \left(\frac{\langle N^{T}\rangle_{T}}{4\alpha_{0}}\right)^{-1} T^{-1/2}\left|Z_{T}\right| \geq \epsilon  \right]\\
&\lesssim \mathbb{E}\left[ \left|\frac{\det\Gamma_{T}(\theta_{0})}{T^{3-2H}} - \frac{\langle N^{T}\rangle_{T}}{2\alpha_{0}}\right|^{p} \right] + \epsilon^{-p}T^{-\frac{p}{2}} \mathbb{E}\left[|Z_{T}|^{p}\right].  
\end{align*}
Since $p>1$ is arbitrary, the conclusion follows from Proposition \ref{proposition_Fisher_mean-reverting}.
\end{proof}

Now we turn to the proof of Theorem~\ref{corollary_asymptotic_normality}. On the set $\Omega \setminus A_{T,\epsilon}$, the first component of $\widehat{g(\theta)}_{T}$ is positive and the MLE is well-defined for sufficiently small $\epsilon>0$. We decompose 
\begin{align*}
\Psi_{T}(\theta_{0})^{-1}(\widehat{\theta}_{T}-\theta_{0}) &= \Psi_{T}(\theta_{0})^{-1}(g^{-1}(\widehat{g(\theta)}_{T})-g^{-1}(g(\theta_{0}))) \mathbf{1}_{\Omega\setminus A_{T,\epsilon}}\\ 
& \quad + \Psi_{T}(\theta_{0})^{-1}(-\theta_{0}) \mathbf{1}_{A_{T,\epsilon}}.
\end{align*}
The first term converges to the normal distribution $\mathcal{N}(0,\mathcal{I}(\theta_{0})^{-1})$ as $T\to\infty$ by the delta method and Slutsky's theorem.
Moreover, the second term tends to 0 as $T\to\infty$ in probability, thanks to Lemma~\ref{0928lemma1}. %%%(\ref{0928limit1}). 
This completes the proof.
\color{black}

%%%%%%%%%%%%%%%%%%%%
\subsection{Proof of (\ref{LAN_parameterization2}) in Theorem~\ref{theorem_LAN}} 
Note that, using (\ref{LR_formula1}) and (\ref{representation_beta_g}), we can rewrite
\begin{equation}\label{likelihood_ratio_W}
	\log{\frac{\mathrm{d}P_{\theta}^T}{\mathrm{d}P_{\theta_{0}}^T}}(\mathbf{X}_{T}^{\theta_{0}})
	=\langle g(\theta)-g(\theta_{0}),\overline{\zeta}_{T}(\theta_{0})\rangle_{\mathbb{R}^2}
		-\frac{1}{2}\left\langle g(\theta)-g(\theta_{0}),\Gamma_{T}(\theta_{0})[g(\theta)-g(\theta_{0})]\right\rangle_{\mathbb{R}^2},
\end{equation}
where
\begin{equation*}
	\overline{\zeta}_{T}(\theta_{0}):=\int_{0}^{T}
	\begin{pmatrix}
	  -\beta_{t}[\mathrm{id}](\mathbf{X}_{T}^{\theta_{0}})\\
	  \beta_{t}[1]
	\end{pmatrix}
	\,\mathrm{d}W_t.
\end{equation*}
Let us take a sufficiently large $T>0$ such that $u\in\mathbb{R}^{2}$ satisfies $g(\theta_{0})+\varphi_{T}(\theta_{0})u\in(0,\infty)\times\mathbb{R}$. 
Then, using (\ref{likelihood_ratio_W}), we obtain
\begin{equation*}
	\log{\frac{\mathrm{d}P_{g^{-1}(g(\theta_{0})+\varphi_{T}(\theta_{0})u)}^T}{\mathrm{d}P_{\theta_{0}}^T}}(\mathbf{X}_{T}^{\theta_{0}})
	=\left\langle u,\Delta_{T}(\theta_{0})\right\rangle_{\mathbb{R}^2}
	-\frac{1}{2}\left\langle u,\mathcal{I}(\theta_{0})u\right\rangle_{\mathbb{R}^2}
	+r_{T}^{(0)}(u;\theta_{0}),
\end{equation*}
where
\begin{equation}\label{definition_r0}
	r_{T}^{(0)}(u;\theta_{0}):=-\frac{1}{2}\langle u,[\widetilde{\Gamma}_{T}(\theta_{0})
	-\mathcal{I}(\theta_{0})]u\rangle_{\mathbb{R}^2}.
\end{equation}
Therefore the conclusion follows from (\ref{key_CLT}).
%%%%%%%%%%%%%%%%%%%%
\subsection{Proof of (\ref{LAN_parameterization1}) in Theorem~\ref{theorem_LAN}}
Set
\begin{align*}
	&R_{T}(u;\theta_{0}):=
	\int_{0}^{1}[J_{g}(\theta_{0}+\epsilon\Psi_{T}(\theta_{0})u)-J_{g}(\theta_{0})]
	\Psi_{T}(\theta_{0})u\,\mathrm{d}\epsilon,\\ 
	&\widetilde{R}_{T}(u;\theta_{0}):=\varphi_{T}(\theta_{0})^{-1}R_{T}(u;\theta_{0}).
\end{align*}
Using (\ref{likelihood_ratio_W}) and Taylor's theorem, we can show
\begin{align*}
	&\log{\frac{\mathrm{d}P_{\theta_{0}+\Psi_{T}(\theta_{0})u}^T}{\mathrm{d}P_{\theta_{0}}^T}}(\mathbf{X}_{T}^{\theta_{0}})\\
	&=\langle J_{g}(\theta_{0})\Psi_{T}(\theta_{0})u,\zeta_{T}(\theta_{0})\rangle_{\mathbb{R}^2}
	-\frac{1}{2}\langle J_{g}(\theta_{0})\Psi_{T}(\theta_{0})u,\Gamma_{T}(\theta_{0})J_{g}(\theta_{0})\Psi_{T}(\theta_{0})u\rangle_{\mathbb{R}^2} +r_{T}(u;\theta_{0})\\
	&=\left\langle u,\Delta_{T}(\theta_{0})\right\rangle_{\mathbb{R}^2}
	-\frac{1}{2}\left\langle u,\mathcal{I}(\theta_{0})u\right\rangle_{\mathbb{R}^2} +r_{T}^{(0)}(u;\theta_{0}) +r_{T}(u;\theta_{0}),
\end{align*}
where $r_{T}^{(0)}(u;\theta_{0})$ and $r_{T}(u;\theta_{0}):=r_{T}^{(1)}(u;\theta_{0})+r_{T}^{(2)}(u;\theta_{0})+r_{T}^{(3)}(u;\theta_{0})$ are respectively defined by (\ref{definition_r0}) and
\begin{align*}
	&r_{T}^{(1)}(u;\theta_{0}):=\langle R_{T}(u;\theta_{0}),\zeta_{T}(\theta_{0})\rangle_{\mathbb{R}^2}
	=\langle \widetilde{R}_{T}(u;\theta_{0}),\Delta_{T}(\theta_{0})\rangle_{\mathbb{R}^2},\\ 
	&r_{T}^{(2)}(u;\theta_{0})
	:=-\langle R_{T}(u;\theta_{0}),\Gamma_{T}(\theta_{0})\varphi_{T}(\theta_{0})u\rangle_{\mathbb{R}^2}
	=-\langle\widetilde{R}_{T}(u;\theta_{0}),\widetilde{\Gamma}_{T}(\theta_{0})u\rangle_{\mathbb{R}^2},\\ 
	&r_{T}^{(3)}(u;\theta_{0})
	:=-\frac{1}{2}\langle R_{T}(u;\theta_{0}),\Gamma_{T}(\theta_{0})R_{T}(u;\theta_{0})\rangle_{\mathbb{R}^2}
	=-\frac{1}{2}\langle \widetilde{R}_{T}(u;\theta_{0}),\widetilde{\Gamma}_{T}(\theta_{0})\widetilde{R}_{T}(u;\theta_{0})\rangle_{\mathbb{R}^2}.
\end{align*}
Using (\ref{key_CLT}) and the Cauchy-Schwartz inequality, it suffices to prove that
\begin{equation*}
	\|\widetilde{R}_{T}(u;\theta_{0})\|_{\mathbb{R}^{2}}=o_{\mathbb{P}}(1)\ \ \mbox{as $T\to\infty$}
\end{equation*}
for any $u\in\mathbb{R}^{2}$. Using the properties of the Frobenius norm $\|\cdot\|_{F}$ and the operator norm $\|\cdot\|_{\mathrm{op}}$, we can show
\begin{align*}
	\|\widetilde{R}_{T}(u;\theta_{0})\|_{\mathbb{R}^{2}}
	&\leq\int_{0}^{1}\|\varphi_{T}(\theta_{0})^{-1}[J_{g}(\theta_{0}+\epsilon\Psi_{T}(\theta_{0})u)-J_{g}(\theta_{0})]
	\Psi_{T}(\theta_{0})u\|_{\mathbb{R}^{2}}\,\mathrm{d}\epsilon\\
	&\leq\|u\|_{\mathbb{R}^{2}}\int_{0}^{1}\|\varphi_{T}(\theta_{0})^{-1}[J_{g}(\theta_{0}+\epsilon\Psi_{T}(\theta_{0})u)-J_{g}(\theta_{0})]
	\Psi_{T}(\theta_{0})\|_{\mathrm{op}}\,\mathrm{d}\epsilon\\
	&\leq\|u\|_{\mathbb{R}^{2}}\|J_{g}(\theta_{0})^{-1}\|_{F}\int_{0}^{1}\|J_{g}(\theta_{0}+\epsilon\Psi_{T}(\theta_{0})u)-J_{g}(\theta_{0})\|_{F}\,\mathrm{d}\epsilon\\
	&\lesssim T^{H-1}+T^{-1/2}\lesssim T^{-1/2}.
\end{align*}
This completes the proof.

%%%%%%%%%%%%%%%%%%%%
\subsection{Proof of Proposition~\ref{fOU_observed_Fisher}}\label{section_proof_preliminary_results}
%%%\color{magenta}
Before stating the proof, we remark that a large part of the proof of Proposition \ref{fOU_observed_Fisher} is the same as the proof of Theorem~4.1 of \cite{Chiba-2020}. Notable exception is that the inequality (\ref{sufficient1}) holds also for $0<H\leq 1/4$. We note that this is due to {the} Gaussianity of the stationary solution $\bar{X}^{\theta}$. 

In the following, we can assume $\mathbb{E}[\bar{X}^{\theta_{0}}_t]=0$ without loss of generality. First we show the inequality (\ref{0928ineq1}). We define {the} analogue of $\langle M^{T}\rangle_{T}$ by
\[
\overline{\langle M^{T}\rangle}_{T} := \frac{1}{T}\int_{0}^{T}\beta_{t}[\mathrm{id}](\bar{\mathbf{X}}_{T}^{\theta_{0}})^{2}\,\mathrm{d}t,
\]
where $\bar{\mathbf{X}}_{T}^{\theta_{0}}:=(\bar{X}_{t}^{\theta_{0}})_{t\in[0,T]}$. Lemma 4.2 of \cite{Chiba-2020} gives {the} inequality
\begin{equation}\label{0406.ineq1}
	\mathbb{E}\left[ \left| \langle M^{T}\rangle_{T} - \overline{\langle M^{T}\rangle}_{T} \right|^{p} \right] \lesssim T^{-2pH}
\end{equation}
for any $p > 1$. We decompose $\beta_{t}[\mathrm{id}](\bar{\mathbf{X}}_{T}^{\theta_{0}})$ into {the following} two terms
\[
\beta_{t}[\mathrm{id}](\bar{\mathbf{X}}_{T}^{\theta_{0}}) = \sigma^{-1}\bar{d}_{H}^{-1}\int_{0}^{t} r^{-1/2-H}\bar{X}^{\theta_{0}}_{t-r}\,\mathrm{d}r + \sigma^{-1}\bar{d}_{H}^{-1}\int_{0}^{t} r^{-1/2-H}\left(\left(1-\frac{r}{t}\right)^{1/2-H}-1\right)\bar{X}^{\theta_{0}}_{t-r}\,\mathrm{d}r
\]
and denote the first and second term{s} by $\gamma^{1}_{t}[\mathrm{id}](\bar{\mathbf{X}}_{T}^{\theta_{0}})$ and $\gamma^{2}_{t}[\mathrm{id}](\bar{\mathbf{X}}_{T}^{\theta_{0}})$ {respectively}. Then $\overline{\langle M^{T}\rangle}_{T}$ can be {written} as
\[
\overline{\langle M^{T}\rangle}_{T} = \sum_{i,j=1,2}\frac{1}{T} \int_{0}^{T} \gamma^{i}_{t}[\mathrm{id}](\bar{\mathbf{X}}_{T}^{\theta_{0}})\gamma^{j}_{t}[\mathrm{id}](\bar{\mathbf{X}}_{T}^{\theta_{0}})\,\mathrm{d}t
\]
and {the Cauchy-Schwarz} %%%H\"{o}lder 
inequality gives
\begin{align}
&\mathbb{E}\left[ \left|\overline{\langle M^{T}\rangle}_{T} -  \frac{1}{T} \int_{0}^{T} \gamma^{1}_{t}[\mathrm{id}](\bar{\mathbf{X}}_{T}^{\theta_{0}})^{{2}}\,\mathrm{d}t \right|^{{p}}\right] \nonumber\\
&{\lesssim} \sum_{(i,j)\neq(1,1)} \mathbb{E}\left[ \left|\frac{1}{T} \int_{0}^{T} \gamma^{i}_{t}[\mathrm{id}](\bar{\mathbf{X}}_{T}^{\theta_{0}})^{2}\,\mathrm{d}t \right|^{{p}}\right]^{1/2}
\mathbb{E}\left[ \left|\frac{1}{T} \int_{0}^{T} \gamma^{j}_{t}[\mathrm{id}](\bar{\mathbf{X}}_{T}^{\theta_{0}})^{2}\,\mathrm{d}t \right|^{{p}}\right]^{1/2} \label{0406.ineq2}
\end{align}
for any $p>1$. On the other hand, combining Proposition~3.3, Lemmas~4.5 and 4.6 {of} \cite{Chiba-2020}, we have the inequalities
\begin{equation}\label{0406.ineq3}
	\left|\mathbb{E}\left[\frac{1}{T} \int_{0}^{T} \gamma^{1}_{t}[\mathrm{id}](\bar{\mathbf{X}}_{T}^{\theta_{0}})^{2}\,\mathrm{d}t \right] - \frac{1}{2\alpha_{0}}\right| \lesssim T^{-2H}
\end{equation}
and
\begin{equation}\label{0406.ineq4}
	\left|\mathbb{E}\left[\frac{1}{T} \int_{0}^{T} \gamma^{2}_{t}[\mathrm{id}](\bar{\mathbf{X}}_{T}^{\theta_{0}})^{2}\,\mathrm{d}t \right] \right| \lesssim T^{-2H}.
\end{equation}
Therefore {
(\ref{0928ineq1}) follows from (\ref{0406.ineq1}), (\ref{0406.ineq2}), (\ref{0406.ineq3}) and (\ref{0406.ineq4}) once we have proved
}
that there exists some positive constant $\kappa(H)>0$ such that
        \begin{equation}\label{sufficient1}
		\mathbb{E}\left[\left|\frac{1}{T} \int_{0}^{T} \gamma^{i}_{t}[\mathrm{id}](\bar{\mathbf{X}}_{T}^{\theta_{0}})^{2}\,\mathrm{d}t - \mathbb{E}\left[\frac{1}{T} \int_{0}^{T} \gamma^{i}_{t}[\mathrm{id}](\bar{\mathbf{X}}_{T}^{\theta_{0}})^{2}\,\mathrm{d}t\right] \right|^{p}\right]\lesssim T^{-\kappa(H)p}
	\end{equation}
for $i=1,2$ and any $p>1$. {
In the following, we prove (\ref{sufficient1}) only in the case $i=1$ because (\ref{sufficient1}) in the case $i = 2$ can be proved in the similar way to the proof of (\ref{sufficient1}) in the case $i = 1$ using the inequality $\left|\left(1-r/t\right)^{1/2-H}-1\right|\leq1$ for $0\leq r \leq t$.
}

\underline{The case $p=2$.} We first prove (\ref{sufficient1}) when $p=2$. By a straight forward calculation, we can show
	\begin{align*}
		&\mathrm{Var}\left[\frac{{1}}{T}\int_{0}^{T} \gamma^{1}_{t}[\mathrm{id}](\bar{\mathbf{X}}_{T}^{\theta_{0}})^{2}\,\mathrm{d}t\right]\\
		&{\lesssim}\mathbb{E}\left[\left|\frac{1}{T}\int_{0}^T\,\mathrm{d}t\int_{0}^t\,\mathrm{d}s\int_{0}^t\,\mathrm{d}u\,(t-s)^{-H-\frac{1}{2}}(t-u)^{-H-\frac{1}{2}}\left(\bar{X}^{\theta_{0}}_{s}\bar{X}^{\theta_{0}}_{u}-\mathbb{E}[\bar{X}^{\theta_{0}}_{s}\bar{X}^{\theta_{0}}_{u}]\right)\right|^{2}\right]\\
		&=\frac{1}{T^{2}}\int_{0}^T\,\mathrm{d}t_{1}\int_{0}^T\,\mathrm{d}t_{2}
		\int_{0}^{t_{1}}\,\mathrm{d}s_{1}\int_{0}^{t_{1}}\,\mathrm{d}u_1\int_{0}^{t_{2}}\,\mathrm{d}s_{2}\int_{0}^{t_{2}}\,\mathrm{d}u_2\\
		&\hspace{2cm}(t_{1}-s_{1})^{-H-\frac{1}{2}}(t_{1}-u_1)^{-H-\frac{1}{2}}(t_{2}-s_{2})^{-H-\frac{1}{2}}(t_{2}-u_2)^{-H-\frac{1}{2}}
		\mathrm{Cov}[\bar{X}^{\theta_{0}}_{s_{1}}\bar{X}^{\theta_{0}}_{u_1},\bar{X}^{\theta_{0}}_{s_{2}}\bar{X}^{\theta_{0}}_{u_2}].
	\end{align*}
	Since $\bar{X}^{\theta_{0}}$ is a centered stationary Gaussian process, the Wick formula gives
	\begin{equation*}
		\mathrm{Cov}[\bar{X}^{\theta_{0}}_{s_{1}}\bar{X}^{\theta_{0}}_{u_1},\bar{X}^{\theta_{0}}_{s_{2}}\bar{X}^{\theta_{0}}_{u_2}]
		=c(s_{1}-s_{2})c(u_{1}-u_{2})+c(s_{1}-u_{2})c(u_{1}-s_{2}),
	\end{equation*}
	where $c(t)=\mathbb{E}[\bar{X}^{\theta_{0}}_t\bar{X}^{\theta_{0}}_0]$ for $t\in\mathbb{R}$. 
	Then we have
	\begin{align*}
		&\mathrm{Var}\left[\frac{1}{T}\int_{0}^T{\gamma_{t}^{1}}[\mathrm{id}](\bar{\mathbf{X}}_{T}^{\theta_{0}})^2\,\mathrm{d}t\right]\\
		&\lesssim\frac{1}{T^{2}}\int_{0}^T\,\mathrm{d}t_{1}\int_{0}^T\,\mathrm{d}t_{2}
		\left(\int_{0}^{t_{1}}\,\mathrm{d}s_{1}\int_{0}^{t_{2}}\,\mathrm{d}s_{2}\,(t_{1}-s_{1})^{-H-\frac{1}{2}}(t_{2}-s_{2})^{-H-\frac{1}{2}}c(s_{1}-s_{2})\right)^2\\
		&=\frac{2}{T^{2}}\int_{0}^T\,\mathrm{d}t_{1}\int_{0}^{t_{1}}\,\mathrm{d}t_{2}
		\left(\int_{0}^{t_{1}}\,\mathrm{d}s_{1}\int_{0}^{t_{2}}\,\mathrm{d}s_{2}\,(t_{1}-s_{1})^{-H-\frac{1}{2}}(t_{2}-s_{2})^{-H-\frac{1}{2}}c(s_{1}-s_{2})\right)^2\\
		&=\frac{2}{T^{2}}\left(\int_{0}^1\,\mathrm{d}t_{1}\int_{0}^{t_{1}}\,\mathrm{d}t_{2} +\int_{1}^T\,\mathrm{d}t_{1}\int_{0}^{t_{1}-1}\,\mathrm{d}t_{2} +\int_{1}^T\,\mathrm{d}t_{1}\int_{t_{1}-1}^{t_{1}}\,\mathrm{d}t_{2}\right)\\
		&\hspace{2cm}\left(\int_{0}^{t_{1}}\,\mathrm{d}s_{1}\int_{0}^{t_{2}}\,\mathrm{d}s_{2}\,(t_{1}-s_{1})^{-H-\frac{1}{2}}(t_{2}-s_{2})^{-H-\frac{1}{2}}c(s_{1}-s_{2})\right)^2\\
		&=:I_1(T)+I_2(T)+I_3(T).
	\end{align*}
	%%%%%%%%%%%%%%%%%%%%
Here $I_{1}(T)$ is obviously $O(T^{-2})$ as $T\to\infty$. First we bound $I_{3}(T)$.
Since $c(t)=O(|t|^{2H-2})$ as $|t|\to\infty$,
\begin{equation}\label{covariance_integrability}
	\int_{-\infty}^{\infty}|c(t)|^{q}\,\mathrm{d}t<\infty
\end{equation}
for any $q\geq 1$. Let $r\in(1,(1/2+H)^{-1})$ and $q:=r/(1-r)$. Using the H\"older inequality,
\begin{align}
	\nonumber
	&\int_{0}^{t_{1}}\,\mathrm{d}s_{1}\int_{0}^{t_{2}}\,\mathrm{d}s_{2}\,(t_{1}-s_{1})^{-H-\frac{1}{2}}(t_{2}-s_{2})^{-H-\frac{1}{2}}|c(s_{1}-s_{2})|\\
	\nonumber
	&\lesssim\int_{0}^{t_{2}}\,\mathrm{d}s_{2}\,(t_{2}-s_{2})^{-H-\frac{1}{2}}
	\left(\int_{0}^{t_{1}}(t_{1}-s_{1})^{-r(H+\frac{1}{2})}\,\mathrm{d}s_{1}\right)^{\frac{1}{r}}
	\left(\int_{0}^{t_{1}}|c(s_{1}-s_{2})|^{q}\,\mathrm{d}s_{1}\right)^{\frac{1}{q}}\\
	\label{inequality_Fisher1}
	&\lesssim\int_{0}^{t_{2}}\,\mathrm{d}s_{2}\,(t_{2}-s_{2})^{-H-\frac{1}{2}}
	t_{1}^{\frac{1}{r}-(H+\frac{1}{2})}\left(\int_{-\infty}^{\infty}|c(t)|^{q}\,\mathrm{d}t\right)^{\frac{1}{q}}
	\lesssim t_{2}^{\frac{1}{2}-H}t_{1}^{\frac{1}{r}-(H+\frac{1}{2})}
\end{align}
so that we obtain
\begin{align*}
	I_{3}(T)\lesssim
	\frac{1}{T^{2}}\int_{1}^T\,\mathrm{d}t_{1}\int_{t_{1}-1}^{t_{1}}\,\mathrm{d}t_{2}\,\left(t_{2}^{\frac{1}{2}-H}t_{1}^{\frac{1}{r}-(H+\frac{1}{2})}\right)^{2}
	\lesssim\frac{1}{T^{2}}\int_{1}^T\,\mathrm{d}t_{1}\,t_{1}^{\frac{2}{r}-(2H+1)+1-2H}
	\lesssim T^{\frac{2}{r}-(4H+1)}.
\end{align*}
Next we bound $I_{2}(T)$ as follows: 
\begin{align*}
	I_{2}(T)
	&\lesssim\frac{1}{T^{2}}\int_{1}^T\,\mathrm{d}t_{1}\int_{0}^{t_{1}-1}\,\mathrm{d}t_{2}
	\left\{\left(\int_{0}^{t_{2}}\,\mathrm{d}s_{1}\int_{0}^{t_{2}}\,\mathrm{d}s_{2}\,(t_{1}-s_{1})^{-H-\frac{1}{2}}(t_{2}-s_{2})^{-H-\frac{1}{2}}|c(s_{1}-s_{2})|\right)^2\right.\\
	&\hspace{2cm}\left.+\left(\int_{t_{2}}^{t_{1}}\,\mathrm{d}s_{1}\int_{0}^{t_{2}}\,\mathrm{d}s_{2}\,(t_{1}-s_{1})^{-H-\frac{1}{2}}(t_{2}-s_{2})^{-H-\frac{1}{2}}c(|s_{1}-s_{2}|)\right)^2\right\}\\
	&=:I_{2,1}(T)+I_{2,2}(T).
\end{align*}
First we bound $I_{2,1}(T)$. Using (\ref{covariance_integrability}), we can show
\begin{align*}
	I_{2,1}(T)&\leq
	\frac{1}{T^{2}}\int_{1}^T\,\mathrm{d}t_{1}\int_{0}^{t_{1}-1}\,\mathrm{d}t_{2}
	\left((t_{1}-t_{2})^{-H-\frac{1}{2}}\int_{0}^{t_{2}}\,\mathrm{d}s_{2}\,(t_{2}-s_{2})^{-H-\frac{1}{2}}\int_{0}^{t_{2}}\,\mathrm{d}s_{1}|c(s_{1}-s_{2})|\right)^2\\
	&\lesssim
	\frac{1}{T^{2}}\int_{1}^T\,\mathrm{d}t_{1}\int_{0}^{t_{1}-1}\,\mathrm{d}t_{2}
	\left((t_{1}-t_{2})^{-H-\frac{1}{2}}t_{2}^{\frac{1}{2}-H}\int_{-\infty}^{\infty}|c(v_{1})|\,\mathrm{d}v_{1}\right)^2\\
	&\lesssim
	\frac{1}{T^{2}}\int_{1}^T\,\mathrm{d}t_{1}t_{1}^{1-2H}\int_{0}^{t_{1}-1}\,\mathrm{d}t_{2}~(t_{1}-t_{2})^{-2H-1}\\
	&\lesssim
	\frac{1}{T^{2}}\int_{1}^T\,\mathrm{d}t_{1}t_{1}^{1-2H}\left(1+t_{1}^{-2H}\right)
	\lesssim T^{-2H}+T^{-4H}\lesssim T^{-2H}.
\end{align*}
Finally we bound $I_{2,2}(T)$. Using (\ref{inequality_Fisher1}), we can show
\begin{align*}
	I_{2,2}(T)
	&\lesssim\frac{1}{T^{2}}\int_{1}^T\,\mathrm{d}t_{1}\int_{0}^{t_{1}}\,\mathrm{d}t_{2}\,t_{2}^{\frac{1}{2}-H}t_{1}^{\frac{1}{r}-(H+\frac{1}{2})}\\
	&\hspace{2cm}\times\int_{t_{2}}^{t_{1}}\,\mathrm{d}s_{1}\int_{0}^{t_{2}}\,\mathrm{d}s_{2}\,(t_{1}-s_{1})^{-H-\frac{1}{2}}(t_{2}-s_{2})^{-H-\frac{1}{2}}|c(s_{1}-s_{2})|\\
	&\leq\frac{1}{T^{2}}\int_{1}^T\,\mathrm{d}t_{1}\,t_{1}^{\frac{1}{r}-2H}\int_{0}^{t_{1}}\,\mathrm{d}t_{2}\int_{t_{2}}^{t_{1}}\,\mathrm{d}s_{1}\int_{0}^{t_{2}}\,\mathrm{d}s_{2}\,(t_{1}-s_{1})^{-H-\frac{1}{2}}(t_{2}-s_{2})^{-H-\frac{1}{2}}|c(s_{1}-s_{2})|\\
	&=\frac{1}{T^{2}}\int_{1}^T\,\mathrm{d}t_{1}\,t_{1}^{\frac{1}{r}-2H}
	\int_{0}^{t_{1}}\,\mathrm{d}s_{1}\int_{0}^{s_{1}}\,\mathrm{d}s_{2}\int_{s_{2}}^{s_{1}}\,\mathrm{d}t_{2}\,(t_{1}-s_{1})^{-H-\frac{1}{2}}(t_{2}-s_{2})^{-H-\frac{1}{2}}|c(s_{1}-s_{2})|\\
	&=\frac{1}{T^{2}}\int_{1}^T\,\mathrm{d}t_{1}\,t_{1}^{\frac{1}{r}-2H}
	\int_{0}^{t_{1}}\,\mathrm{d}s_{1}\,(t_{1}-s_{1})^{-H-\frac{1}{2}}
	\int_{0}^{s_{1}}\,\mathrm{d}s_{2}\,(s_{1}-s_{2})^{\frac{1}{2}-H}|c(s_{1}-s_{2})|\\
	&\lesssim\frac{1}{T^{2}}\int_{1}^T\,\mathrm{d}t_{1}\,t_{1}^{\frac{1}{r}-2H}
	\int_{0}^{t_{1}}\,\mathrm{d}s_{1}\,(t_{1}-s_{1})^{-H-\frac{1}{2}},
\end{align*}
where we used the fact that
\begin{equation*}
	\int_{0}^{s_{1}}\,\mathrm{d}s_{2}\,(s_{1}-s_{2})^{\frac{1}{2}-H}|c(s_{1}-s_{2})|
	\leq\int_{0}^{\infty}\,\mathrm{d}v_{2}\,v_{2}^{\frac{1}{2}-H}|c(v_{2})|<\infty
\end{equation*}
thanks to
$c(t)=O(|t|^{2H-2})$ as $|t|\to\infty$. Therefore we obtain
\begin{equation*}
	I_{2,2}(T)
	\lesssim\frac{1}{T^{2}}\int_{1}^T\,\mathrm{d}t_{1}\,t_{1}^{\frac{1}{r}+\frac{1}{2}-3H}
	\lesssim T^{\frac{1}{r}-\frac{1}{2}-3H}.
\end{equation*}
To sum up, we obtain the following upper bound:
\begin{align} \label{0927ineq1}
	\mathrm{Var}\left[\frac{1}{T}\int_{0}^{T} \gamma^{1}_{t}[\mathrm{id}](\bar{\mathbf{X}}_{T}^{\theta_{0}})^{2}\,\mathrm{d}t \right]
	\lesssim T^{-2}+T^{\frac{2}{r}-(4H+1)}+T^{-2H}+T^{\frac{1}{r}-\frac{1}{2}-3H}.
\end{align}
Therefore (\ref{sufficient1}) follows if we take $r$ satisfying
\begin{equation*}
	1\vee\frac{2}{4H+1}<r<(H+1/2)^{-1}.
\end{equation*}
This completes the proof. 

\underline{{The general case $p>1$.}} Since the {random} variable
\[
\frac{1}{T}\int_{0}^{T} \gamma^{1}_{t}[\mathrm{id}](\bar{\mathbf{X}}_{T}^{\theta_{0}})^{2}\,\mathrm{d}t - \mathbb{E}\left[\frac{1}{T}\int_{0}^{T} \gamma^{1}_{t}[\mathrm{id}](\bar{\mathbf{X}}_{T}^{\theta_{0}})^{2}\,\mathrm{d}t\right]
\]
is {an element of} the second-order Wiener chaos, the hypercontractivity of the Wiener integral gives the inequality
\begin{align*}
&\mathbb{E}\left[\left|\frac{1}{T}\int_{0}^{T} \gamma^{1}_{t}[\mathrm{id}](\bar{\mathbf{X}}_{T}^{\theta_{0}})^{2}\,\mathrm{d}t - \mathbb{E}\left[\frac{1}{T}\int_{0}^{T} \gamma^{1}_{t}[\mathrm{id}](\bar{\mathbf{X}}_{T}^{\theta_{0}})^{2}\,\mathrm{d}t\right] \right|^{p}\right] \leq C(p) \mathrm{Var}\left[\frac{1}{T}\int_{0}^{T} \gamma^{1}_{t}[\mathrm{id}](\bar{\mathbf{X}}_{T}^{\theta_{0}})^{2}\,\mathrm{d}t\right]
^{\frac{p}{2}}
\end{align*}
for some positive constants $C(p) >0$, see Section 2.7 of \cite{Nourdin-Peccati-2012-book}. 
{Since the right hand side of the above inequality is further dominated using the inequality (\ref{0927ineq1}), we finish the proof of (\ref{sufficient1}).
}

Finally we prove the inequality (\ref{0928ineq2}).
The inequality (\ref{0928ineq2}) follows directly from (4.2) {in} Theorem~4.1 {of} \cite{Chiba-2020}. 
Indeed, (4.2) {in} Theorem~4.1 {of} \cite{Chiba-2020} gives 
\[
\mathbb{E}\left[\left| \int_{0}^{T}t^{1/2-H}\beta_{t}[\mathrm{id}](\mathbf{X}^{\theta_{0}}_{T})\,\mathrm{d}t\right|^{p}\right] \lesssim T^{-p(2H-3/2)}
\]
and hence 
\[
\mathbb{E}\left[\left|\langle M^{T},N^{T}\rangle_{T}\right|^{p}\right] \lesssim T^{p(H-3/2)} T^{-p(2H-3/2)} = T^{-pH}
\]
for any $p > 1$. This completes the proof.

\bibliographystyle{acmtrans-ims}
\bibliography{myref}

\end{document}